\newtheorem{theorem}{Theorem}[section]
\newtheorem{definition}[theorem]{Definition}
\newtheorem{lemma}[theorem]{Lemma}
\newtheorem{proposition}[theorem]{Proposition}
\titleformat{\section}[hang]
 {\normalfont\scshape\centering}
 {\thesection. }{0pt}{}
\begin{document}
\title{Sums of Hurwitz Class Numbers and newform of weight 2 and level 49} 
\author[Initial Surname]{FangMin Guo}
\date{\today}
\email{161240020@smail.nju.edu.cn}
\maketitle

\let\thefootnote\relax

\begin{abstract}
We consider sums of Hurwitz class number $H_{m,M}(n)=\sum_{t\equiv m (\text{mod} M)}{H(4n-t^2)}$, where $H(N)$ denotes the Hurwitz class number. In this article, we consider the case of $M=7$. By completing the mixed mock modular form generated by $H_{m,7}(n)$, We obtain the formula of modular forms consist of a computable part and a part from newform 49.2.a.a whose prime terms of Fourier expansion has a connection with with $p=x^2+7y^2$ $(p\equiv 1,2,4 \mod 7)$.
\end{abstract} 

\bigskip

\section{INTRODUCTION}

The Hurwitz class number, first induced by Kronecker, Hurwitz, Gierster(\cite{history}, Vol.III, Chapter.VII), can be defined as weighted sum of class numbers of orders(\cite{primenxy}, Chapter.14):\[H(\mathcal{O})=\sum_{\mathcal{O'\subset\mathcal{O}\subset\mathcal{O}_K}}\frac{2}{\#(\mathcal{O'^*})}h(\mathcal{O'}),\]where $\mathcal{O}$ is an order in the integral order $\mathcal{O}_K$. The orders with the same discriminant shares the same $H(\mathcal{O})$, therefore $H(D)$, where $D$ denotes the discriminant, is well-defined. This definition is equivalent with the weighted sum:\[H(D)=\sum_{Q\in\Omega/\rm{SL}_2(\mathbb{Z})}\frac{1}{w_Q},\quad D\equiv 0,-1\mod 4,\]where $\Omega$ represents the set of all integral binary quadratic form of discriminant $D$, $w_Q$ equals $1/2$ if $Q=a(x^2+y^2)$, $1/3$ if $Q=a(x^2+xy+y^2)$ and $1$ in other case. Moreover, we define $H(0)=-1/12$ and $H(D)=0$ if $D\equiv 1,2\mod 4$ for convenience. 

There are several equations about sums of Hurwitz class numbers. The well-known Hurwitz-Kronecker relation(\cite{primenxy}, Chapter.14), first found in 1860, can be written as $\sum_{a\in\mathbb{Z}}H(4n-a^2)=2\sum_{d|n}d-\sum_{d|n}\min(d,\frac{n}{d})$, or in prime case, \[\sum_{a\in\mathbb{Z}}H(4p-a^2)=2p.\] One can prove this equation by computing the degree of modular equation $\Phi_m(X,X)$(\cite{primenxy}, Chapter.13). Similar phenomenon occurs when considering sums that taking modular. Define \[H_{m,M}(n):=\sum_{\substack{m\in\mathbb{Z}\\[1pt]a\equiv m\mod M}}H(4n-a^2).\] B. Brown et al. work considers the case m=2,3,4,5,7\cite{Brown2008EllipticCM}. For example, \[H_{m,2}(p)=\begin{cases}
\frac{4p-2}{3} & \text{if } m = 0 \\
\frac{2p+2}{3} & \text{if } m = 1.
\end{cases}\]
They give proof of all cases when $m=2,3,4$ and several cases when $m=5,7$. They conjectured all other case of $m=5$ and several cases of $m=7$. The pattern of other cases when $m=7$ was unknown in that article. In 2013 and 2014, K. Bringmann AND B. Kane\cite{BRINGMANN_2018} and Mertens\cite{Mertens_2016} proved the conjectured case about $m=5,7$ using knowledge about mixed mock modular form.

Using similar method of mixed mock modular form, In 2024, M. Zindulka\cite{zindulka2024sumshurwitzclassnumbers} prove that when $m=6,8$, a new pattern of equation occurs. The expression of $H_{m,M}(p)$ includes a term linearly depend on $p$ as usual and an "error term", expresses as $\chi(x)x$, where $\chi$ denote a non-principal real character and $x$ is the unique positive number satisfies $p=x^2+ly^2$, where $l$ is an integer depends on $m,M$. For example, \[H_{0,6}(p)=\begin{cases}
\frac{p+1}{3}+\frac{1}{3}\chi_{-3}(x)x & \text{if } p\equiv 1\mod 3 \\
\frac{2p-4}{3} & \text{if } p\equiv 2\mod 3.
\end{cases}\]

In this article, we will give all formula of $H_{m,7}(p)$, using the method of mixed mock modular form.
Here is the main result of this article:
\begin{theorem}\label{main}
    Let $p>2$ be a prime and $p\equiv 1,2,4 \mod 7$. Then there exist a unique positive integer $x$ such that $p=x^2+7y^2$. Let $\chi_{-7} $ denotes the non principle real character modulo 7. We have\[
    H_{a,7}(p)=\begin{cases}
\frac{p+1}{4}+\frac{1}{2}\chi_{-7}(x)x & \text{if } a=0 \\
\frac{p+1}{4}-\frac{1}{2}\chi_{-7}(x)x & \text{if } a=\pm 1, p\equiv 4\mod 7, \text{or }a=\pm 2, p\equiv 1\mod 7, \\ & \text{or }a=\pm 3, p\equiv 1\mod 7,\\
\frac{7p+7}{24}+\frac{1}{4}\chi_{-7}(x)x & \text{if } a=\pm 1, p\equiv 2\mod 7, \text{or }a=\pm 3, p\equiv 4\mod\\
\frac{7p-17}{24}+\frac{1}{4}\chi_{-7}(x)x & \text{if } a=\pm 2, p\equiv 1\mod 7.
\end{cases}
    \]
\end{theorem}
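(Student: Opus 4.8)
The plan is to follow the mixed mock modular form strategy of Mertens~\cite{Mertens_2016}, Bringmann--Kane~\cite{BRINGMANN_2018} and Zindulka~\cite{zindulka2024sumshurwitzclassnumbers}. Fix $a$ and set $\mathcal H_{a,7}(\tau):=\sum_{n\ge0}H_{a,7}(n)q^n$ with $q=e^{2\pi i\tau}$. Substituting $4n-t^2$ into the exponent of Zagier's generating series $\mathcal H(\tau)=\sum_{N\ge0}H(N)q^N$ and summing over $t\equiv a\pmod7$ exhibits $\mathcal H_{a,7}$, after the usual rescaling $\tau\mapsto\tau/4$, as essentially the product of $\mathcal H$ with the unary theta function $\theta_a(\tau)=\sum_{t\equiv a(7)}q^{t^2}$ of weight $\tfrac12$ (more precisely, of $\mathcal H$ sieved by the residue of $N$ modulo $4$ against $\theta_a$). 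Since $\mathcal H$ is the holomorphic part of the weight-$\tfrac32$ harmonic Maass form $\widehat{\mathcal H}$, the product $\widehat{\mathcal H}\cdot\theta_a$ is a harmonic Maass form of weight $2$ on a congruence subgroup of level $196=4\cdot49$ (the $49$ coming from $\theta_a$, possibly with a quadratic nebentypus modulo $7$), and $\mathcal H_{a,7}$ is, up to elementary terms, its holomorphic part; this is the precise sense in which $H_{a,7}(n)$ "generates a mixed mock modular form".

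The second step is to replace this mock object by an honest modular form. A standard computation identifies the shadow of $\widehat{\mathcal H}\cdot\theta_a$ with a finite combination of products of unary theta series of weights $\tfrac12$ and $\tfrac32$ attached to residues modulo $7$. Applying holomorphic projection $\pi_{\mathrm{hol}}$ to $\widehat{\mathcal H}\cdot\theta_a$ (its growth at the cusps is mild enough for $\pi_{\mathrm{hol}}$ to be defined) produces a holomorphic modular form $g_a$ of weight $2$ and level dividing $196$, and comparing $\pi_{\mathrm{hol}}(\widehat{\mathcal H}\cdot\theta_a)$ with $\widehat{\mathcal H}\cdot\theta_a$ up to the explicit non-holomorphic correction yields an identity $H_{a,7}(n)=c_n(g_a)+\lambda_a(n)$, where $c_n(g_a)$ is the $n$-th Fourier coefficient of $g_a$ and $\lambda_a$ is an elementary divisor-type term coming from the shadow. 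For $n=p$ an odd prime, $\lambda_a(p)$ reduces to a constant, so it remains to understand the Fourier coefficients of $g_a$ at primes. (Equivalently one may cancel the shadow directly by adding a suitable $\mathbb Q$-linear combination of the forms $\widehat{\mathcal H}\cdot\theta_{a'}$ and of weight-$2$ unary theta series; the resulting identity is the same.)

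Next I would decompose $g_a$ in the finite-dimensional space $M_2$ of level dividing $196$. The Eisenstein part is a rational combination of the weight-$2$ Eisenstein series $E_2(\tau)-dE_2(d\tau)$ and their twists by characters modulo $7$, whose $p$-th coefficient for a prime $p$ is a $\mathbb Q$-combination of $\sigma_1(p)=p+1$ and of $1$; this is exactly what produces the leading terms $\tfrac{p+1}{4}$, $\tfrac{7p+7}{24}$, $\tfrac{7p-17}{24}$ and the additive constants. The cuspidal part lies in $S_2$ of level dividing $196$, and by examining the nebentypus and the primes dividing the level one checks that the only Hecke eigenforms that can occur are the oldform copies $f(\tau),f(2\tau),f(4\tau)$ of the newform $f\in S_2^{\mathrm{new}}(\Gamma_0(49))$ with LMFDB label 49.2.a.a, which has complex multiplication by $\mathbb Q(\sqrt{-7})$. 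The rational coefficients of the Eisenstein series and of the three oldforms in $g_a$ are then determined by matching finitely many Fourier coefficients (up to the Sturm bound for level $196$).

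Finally, specialize to a prime $p\equiv1,2,4\pmod7$, so that $p$ splits in $\mathbb Q(\sqrt{-7})$ and, since $p>2$, $p=x^2+7y^2$ with $x$ uniquely determined once one requires $x>0$ (and $7\nmid x$). For such $p$ the oldforms $f(2\tau),f(4\tau)$ contribute nothing at $q^p$, while $f(\tau)$ contributes $a_p(f)$; the classical CM formula (deduced from the Hecke Gr\"ossencharakter of $\mathbb Q(\sqrt{-7})$, whose conductor is $(\sqrt{-7})$) gives $a_p(f)=2\,\chi_{-7}(x)\,x$. Substituting this together with $\sigma_1(p)=p+1$ into the decomposition of $g_a$, and keeping track of which residue $t^2$ takes modulo $7$ in each case (this is what splits the statement according to $a$ and $p\bmod7$), yields the claimed formulas. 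I expect the main obstacles to be concentrated in the last two steps: (i) verifying that no cusp forms other than the three copies of $f$ appear; (ii) computing the exact rational coefficients in the decomposition; and (iii) fixing the sign convention for $\chi_{-7}(x)$ so that it is uniform across all residue classes of $a$ and of $p$ modulo $7$ --- a finite but delicate verification, and the place where an error would most easily go unnoticed.
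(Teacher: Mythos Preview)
Your proposal is correct and follows the same overall strategy as the paper: complete $\mathcal H_{a,7}=(\mathcal H\cdot\theta_{a,7})|U_4$ to a genuine weight-$2$ object (the paper quotes the Kane--Pujahari theorem, which packages exactly the holomorphic-projection computation you outline together with the explicit divisor correction $\tfrac12\Lambda_{1,a,7}$), sieve by residues modulo~$7$ to land in $M_2(\Gamma_0(196)\cap\Gamma_1(7))$, and then pin down the decomposition by matching Fourier coefficients up to the Sturm bound.

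The one substantive difference is in how the $p$-th coefficient of the CM newform $G=f\in S_2^{\mathrm{new}}(\Gamma_0(49))$ is computed. You invoke the Hecke Gr\"ossencharakter of $\mathbb Q(\sqrt{-7})$ to get $a_p(f)=2\chi_{-7}(x)x$ directly. The paper instead constructs the weight-$2$ form
\[
\Psi_7(\chi_{-7},\tau)=\theta(\chi_{-7},1,\tau)\cdot(\theta_0|V_7)(\tau)=\tfrac12\sum_{n\ge1}\Bigl(\sum_{x^2+7y^2=n}\chi_{-7}(x)x\Bigr)q^n\in M_2(\Gamma_0(196)),
\]
and verifies by Sturm bound that $\Psi_7=G-G|U_2+4G|U_4$, so that for odd primes $a_p(G)=a_p(\Psi_7)=2\chi_{-7}(x)x$. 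Your route is more conceptual; the paper's is entirely finite and self-contained, avoiding any appeal to CM $L$-function theory. A small caution: your assertion that ``only the oldform copies of $f$ can occur'' is not something one sees by inspecting the level and nebentypus alone (the cusp space at level $196$ is larger), and the paper does not argue this a priori either---both approaches ultimately rely on the Sturm-bound verification you mention.
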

The pattern of this result is similar with case $m=6,8$. When $m=6,8$, The error terms are from CM cusp form $\Psi_3(\chi_{-3},\tau)\in\Gamma_0(36)$ and $\Psi_4(\chi_{-4},\tau)\in\Gamma_0(64)$ respectively, where $\Psi_k(\chi,\tau)=\frac{1}{2}\sum_{n=1}^{\infty}(\sum_{x^2+ky^2=n}\chi(x)x) q^n$. In the case $m=7$, the error terms is from CM cusp form with label $49.2.a$\cite{lmfdb}. This modular form is different from $\Psi_7(\chi_{-7},\tau)\in\Gamma_0(4\cdot49)$, but has a connection with it.

This paper is organized as follows. In section 2, we briefly introduce the basic concepts of mixed mock modular form. In section 3, we obtain the equation of the modular form generated by $H_{m,7}(n)$. In the last section, we compute the prime term of cusp form $49.2.a.a$ to deduce Thm\ref{main}.

\section{PRELIMINARIES}

In this chapter, we introduce basic knowledge of mixed mock modular forms and several operators of modular forms we will use latter. After this, we state several theorems which serve the modular form generated by $H_{m,7}(n)$. We shall start from Hurwitz class number and its generate $q$-series. This $q$-series is not a modular form but a mock modular form. We introduce the definition of mock modular form first.

\begin{definition}
Let $\mathbb{H}$ be the upper half plane, $\tau=x+iy\in\mathbb{H}$, $x,y\in\mathbb{R}$. Fix $k\in\frac{1}{2}\mathbb{Z}$ and a smooth function $f:\mathbb{H}\xrightarrow{}\mathbb{C}$. Let $\gamma=\left(\begin{matrix}a & b\\c & d\end{matrix}\right)\in \text{SL}_2(\mathbb{Z})$. If $k\in\frac{1}{2}+\mathbb{Z}$ we additionally need $\gamma\in\Gamma_0(4)$. The weight $k$ slash operator $|_k$ is defined by\[(f|_k\gamma)(\tau)=\begin{cases}
    (c\tau+d)^{-k}f\left(\frac{a\tau+b}{c\tau+d}\right) & \tau\in\mathbb{Z}\\
    (c\tau+d)^{-k}\left(\frac{c}{d}\right)\epsilon_df\left(\frac{a\tau+b}{c\tau+d}\right) & \tau\in\frac{1}{2}+\mathbb{Z},
\end{cases}\]where $\left(\frac{c}{d}\right)$ is the extended Legendre symbol, and $\epsilon_d=\begin{cases}
    1 & d\equiv 1\mod 4 \\
    i & d\equiv 3\mod 4
\end{cases}$.
\end{definition}

\begin{definition}{\rm(pure mock modular form)}\cite{dabholkar2014quantumblackholeswall}\\
    Let $\mathbb{H}$ be the upper half plane, $k\in\frac{1}{2}\mathbb{Z}$, $\Gamma$ be a congruent subgroup of $\rm{SL}_2(\mathbb{Z})$. We define the (weakly holomorphic) pure mock modular form of weight $k\in\frac{1}{2}\mathbb{Z}$ to be the first term of the pair $(h,g)$, where\\
    \rm (1) \it $h:\mathbb{H}\xrightarrow{}\mathbb{C}$ is a holomorphic function and grows at most exponential speed at all the cusps of $\Gamma$.\\
    \rm (2) \it the function $g$, called the shadow of $h$, is a holomorphic modular form of weight $2-k$.\\
    \rm (3) \it the sum $\hat{h}=h+g^*$, called the completion of $h$, transforms like a modular form of weight $k$ on $\Gamma$, which means $\hat{h}|_k\gamma=\hat{h}$ for all $\gamma\in \Gamma$.(cite  zagier quantum blackhole *********) The $g^*(\tau)$ above, called the non-holomorphic Eichler integral, is the solution of the differential function\[(4\pi\text{Im}(\tau))^k\frac{\partial g^*(\tau)}{\partial\bar{\tau}}=-2\pi i\overline{g(\tau)}.\]Moreover, if the Fourier expansion of $g$ is $\sum_{n\geq 0}b_nq^n$, we can choose $g^*$ to be \[g^*(\tau)=\bar{b_0}\frac{(4\pi \text{Im}(\tau))^{-k+1}}{k-1}+\sum_{n>0}n^{k-1}\bar{b_n}\Gamma(1-k,4\pi n\text{Im}(\tau))q^{-n},\]where $\Gamma(1-k,x)=\int_{x}^{\infty}t^{-k}e^{-t}dt$ is the incomplete Gamma function. If $k=1$, we shall replace the first term of $g^*(\tau)$ with $-\bar{b_0}\log(4\pi\text{Im}(\tau))$. In the case that $k>1$, to avoid divergent, we shall replace $g^*(\tau)$ with rountine integral \[\left(\frac{i}{2\pi}\right)^{k-1}\int_{-\bar{\tau}}^{\infty}(z+\tau)^{-k}\overline{g(-\bar{z})}dz.\]
\end{definition}

One can also define mock modular form by harmonic maass form. To put it simply, a harmonic maass form may canonically decompose into a holomorphic part and a non-holomorphic part. A mock modular form is the holomorphic part of a harmonic maass form. For more details of this traditional definition, see \cite{maass}\cite{bruinier2003geometricthetalifts}.

Back to Hurwitz class numbers. For $\tau$ in the upper half plane, let $q=e^{2\pi i\tau}$. We define the generating series of Hurwitz class numbers: \[\mathcal{H}(q):=\sum_{n=0}^{\infty}H(n)q^n.\]$\mathcal{H}(\tau)$ is not a modular form, instead it is a mock modular form. By Zagier's work\cite{Hirzebruch1976IntersectionNO} in 1976, if we write \[\mathcal{F}(\tau):=\mathcal{H}(\tau)+(\text{Im}(\tau))^{-\frac{1}{2}}\sum_{f=-\infty}^{\infty}\beta(4\pi f^2\text{Im}(\tau))e^{-2\pi if^2}\], where $\beta=\frac{1}{16\pi}\int_{1}^{\infty}u^{-\frac{3}{2}}e^{-xu}du$, then $\mathcal{F}$ satisfies weak modularity of weight $\frac{3}{2}$ on $\Gamma_0(4)$. Here $\mathcal{F}$ is the complete of $\mathcal{H}$, and the classical theta function $\theta(\tau)=\sum_{n\in\mathbb{Z}}q^{n^2}$ is the shadow of $\mathcal{H}$.

As an extension of pure mock modular form, the conception of mixed mock modular form is introduced. 
\begin{definition}{\rm(mixed mock modular form)}\cite{dabholkar2014quantumblackholeswall}\\
    A mixed mock modular form of mixed weight $k|l$ on some congruent subgroup $\Gamma\subset\rm{SL}_2(\mathbb{Z})$ is a holomorphic function $h:\mathbb{H}\xrightarrow{}\mathbb{C}$, with polynomial growth at all the cusps, can be completed with $\hat{h}=h+\sum_{j}f_j g_j^*$, where $\hat{h}$ transforms like modular forms of weight $k$, $f_j$ is a modular form of weight $l$ and $g_j$ ia a modular form of weight $2-k+l$.
\end{definition}

Its easy to find that if $h$ is a mock modular form of weight $k$ on $\Gamma$ with polynomial growth st the cusps with completion $\bar{h}=h+g^*$, and $f$ is a modular form of weight $l$ on $\Gamma$, then by $\hat{h}f=hf+g^*f$, $hf$ is a mixed mock modular form of weight $k+l|l$.

Before proceeding the introduction, we introduce several operators of modular form.
\begin{definition}
Let $f$ be a modular form of weight $k\in\frac{1}{2}\mathbb{Z}$ on some congruence subgroup $\Gamma$ of $\text{SL}_2(\mathbb{Z})$. Suppose $f$ has Fourier expansion $f=\sum_{n\geq0}a(n)q^n$.\\
\rm (1) \it For a positive integer $M$ The $U_M$ operator is defined as $(f|U_M)(\tau):=\sum_{n\geq0}a(Mn)q^n$.\\
\rm (2) \it For a positive integer $M$ The $V_M$ operator is defined as $(f|V_M)(\tau):=\sum_{n\geq0}a(Mn)q^{Mn}$.\\
\rm (3) \it For a positive integer $M$ and a integer $m$ The Sieving operator $S_{M,m}$ is defined as\\ $(f|S_{M,m})(\tau):=\sum_{n\equiv m\mod M}a(n)q^{n}$.\\
\rm (4) \it For a character $\chi$ Th operator $\otimes$ is defined as $(f\otimes \chi)(\tau):=\sum_{n\geq 0}a(n)\chi(n)q^{n}$.
\end{definition}
\begin{proposition} Let $\Gamma_1$ and $\Gamma_2$ be congruence subgroups of $\text{SL}_2(\mathbb{Z})$. Let $k\in\frac{1}{2}\mathbb{Z}$.\\ If $k\in\frac{1}{2}+\mathbb{Z}$, we need $\Gamma_1\subset\Gamma_0(4)$ additionally.\\
    \rm (1) \it Let integer $N_2|N_1$, then $U_M$ maps modular forms of weight $k$ on $\Gamma_1=\Gamma_0(N_1)\cap\Gamma_1(N_2)$ to $\Gamma_2=\Gamma_0(\text{lcm}(N_1,M))\cap\Gamma_1(N_2)$.\\
    \rm (2) \it Let integer $N_2|N_1$, then $V_M$ maps modular forms of weight $k$ on $\Gamma_1=\Gamma_0(N_1)\cap\Gamma_1(N_2)$ to $\Gamma_2=\Gamma_0(\text{lcm}(N_1,M^2,MN_2))\cap\Gamma_1(N_2)$.\\
    \rm (3) \it Let integer $N_2|N_1$, then $S_{m,M}$ maps modular forms of weight $k$ on $\Gamma_1=\Gamma_0(N_1)\cap\Gamma_1(N_2)$ to $\Gamma_2=\Gamma_0(\text{lcm}(N_1,M^2,MN_2))\cap\Gamma_1(\text{lcm}(N_2,M))$.\\
    \rm (4) \it $\cdot\otimes\chi$ maps modular forms of weight $k$ and character $\phi$ on $\Gamma_1=\Gamma_0(N)$ to modular forms of weight $k$ and character $\phi\chi^2$ on $\Gamma_2=\Gamma_0(\text{lcm}(N,M^2)$.\\
\end{proposition}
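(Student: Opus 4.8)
The plan is to treat all four operators by a single device: express each as a finite $\mathbb{C}$-linear combination $f\mapsto\sum_j c_j\,(f|_k\alpha_j)$ with $\{\alpha_j\}$ a fixed finite family in $\mathrm{GL}_2^+(\mathbb{Q})$, and then verify invariance under $\Gamma_2$ by showing that right multiplication by any $\gamma\in\Gamma_2$ permutes the $\alpha_j$ modulo left multiplication by $\Gamma_1$. Precisely, if for each $j$ one can factor $\alpha_j\gamma=\delta_j\,\alpha_{\sigma_\gamma(j)}$ with $\delta_j\in\Gamma_1$ and $\sigma_\gamma$ a permutation of the index set, then $(f|_k\alpha_j)|_k\gamma=(f|_k\delta_j)|_k\alpha_{\sigma_\gamma(j)}=f|_k\alpha_{\sigma_\gamma(j)}$, so $\sum_j c_j(f|_k\alpha_j)$ is $|_k\gamma$-invariant provided $c_j=c_{\sigma_\gamma(j)}$ for all $j$. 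Holomorphy and the prescribed growth at the cusps are preserved because the $\alpha_j$ form a fixed finite set of rational matrices, so the image is again a genuine modular form on $\Gamma_2$; I would only indicate this. Everything then comes down to exhibiting the factorizations and checking, prime by prime, that the divisibility conditions in the definition of $\Gamma_2$ are exactly what makes each $\delta_j$ integral and $\Gamma_1$-valued (and, when needed, makes $\sigma_\gamma$ trivial).

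For $U_M$, the quantity $f|U_M$ equals, up to a nonzero scalar, $\sum_{b\bmod M}f|_k\bigl(\begin{smallmatrix}1&b\\0&M\end{smallmatrix}\bigr)$, all terms entering with equal coefficient. Given $\gamma=\bigl(\begin{smallmatrix}a&b\\c&d\end{smallmatrix}\bigr)\in\Gamma_0(\mathrm{lcm}(N_1,M))\cap\Gamma_1(N_2)$ one factors $\bigl(\begin{smallmatrix}1&b\\0&M\end{smallmatrix}\bigr)\gamma=\delta_b\bigl(\begin{smallmatrix}1&b'\\0&M\end{smallmatrix}\bigr)$; the hypotheses $\mathrm{lcm}(N_1,M)\mid c$ and $N_2\mid N_1$ are exactly what forces $\delta_b$ to be integral of determinant $1$ and to lie in $\Gamma_0(N_1)\cap\Gamma_1(N_2)$, while $b\mapsto b'$ is an affine bijection of $\mathbb{Z}/M\mathbb{Z}$; since the coefficients are all equal, the permutation is harmless. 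The operator $V_M$ is the single-matrix case, $f|V_M$ a scalar times $f|_k\bigl(\begin{smallmatrix}M&0\\0&1\end{smallmatrix}\bigr)$, with $\bigl(\begin{smallmatrix}M&0\\0&1\end{smallmatrix}\bigr)\gamma=\delta\bigl(\begin{smallmatrix}M&0\\0&1\end{smallmatrix}\bigr)$ and $\delta\in\Gamma_1$ once $M^2\mid c$ and $MN_2\mid c$, which is why $\mathrm{lcm}(N_1,M^2,MN_2)$ appears. For the twist $f\otimes\chi$ with $\chi$ modulo $M$ I would use the fact that $f\otimes\chi$ is, up to the Gauss-sum factor, $\sum_{u\bmod M}\overline\chi(u)\,f|_k\bigl(\begin{smallmatrix}1&u/M\\0&1\end{smallmatrix}\bigr)$ for $\chi$ primitive (the imprimitive case reduces to it), conjugate by $\gamma\in\Gamma_0(\mathrm{lcm}(N,M^2))$ to find that the index $u$ is sent to $d^{2}u\bmod M$, and collect the factor $\phi(d)$ coming from the nebentypus of $f$ together with the factor $\chi(d)^{2}$ from the reindexing; this reproduces the classical twisting computation (Diamond--Shurman, and Shimura in half-integral weight) and gives character $\phi\chi^{2}$ on $\Gamma_0(\mathrm{lcm}(N,M^2))$.

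The sieving operator needs slightly more care, and it is here that the enlarged levels of the statement arise. Writing $f|S_{m,M}=\frac1M\sum_{j\bmod M}e^{-2\pi i mj/M}\,f(\tau+\tfrac jM)$ and viewing $f(\tau+j/M)$ as $f|_k\bigl(\begin{smallmatrix}1&j/M\\0&1\end{smallmatrix}\bigr)$, conjugation by $\gamma\in\Gamma_0(\mathrm{lcm}(N_1,M^2,MN_2))\cap\Gamma_1(\mathrm{lcm}(N_2,M))$ produces $\delta_j\bigl(\begin{smallmatrix}1&j'/M\\0&1\end{smallmatrix}\bigr)$ with $j'\equiv d^{2}j\pmod M$ and $\delta_j\in\Gamma_0(N_1)\cap\Gamma_1(N_2)$; but now $d\equiv1\pmod M$ since $\gamma\in\Gamma_1(\mathrm{lcm}(N_2,M))$, so $j'\equiv j$, the phases $e^{-2\pi i mj/M}$ are unchanged, and $f|S_{m,M}$ is fixed with no nebentypus. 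The requirement that each $\delta_j$ lie in $\Gamma_0(N_1)\cap\Gamma_1(N_2)$ forces $\mathrm{lcm}(N_1,M^2,MN_2)\mid c$, and the requirement that $j\mapsto j'$ be trivial on $\mathbb{Z}/M\mathbb{Z}$ forces $M\mid d-1$, i.e. $\gamma\in\Gamma_1(\mathrm{lcm}(N_2,M))$ — exactly the two constraints defining $\Gamma_2$.

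The main obstacle is the half-integral weight case $k\in\frac12+\mathbb{Z}$: there the slash operator carries the theta multiplier $\bigl(\frac cd\bigr)\epsilon_d$, so every factorization $\alpha_j\gamma=\delta_j\alpha_{\sigma_\gamma(j)}$ must be upgraded to an identity of automorphy factors — most cleanly by running the whole argument in the metaplectic cover and invoking the cocycle relation for the theta multiplier — and one must check that the multiplier attached to each $\delta_j$ is the one under which $f$ transforms, so that $f|_k\delta_j=f$ still holds. This is precisely where $\Gamma_1\subseteq\Gamma_0(4)$ is used: it forces $4\mid N_1$, hence $4$ divides every output level, so the theta multiplier is defined on all the groups involved and the $\bigl(\frac cd\bigr),\epsilon_d$ factors split off correctly across the factorizations. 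The remaining work — the prime-by-prime verification of the $\mathrm{lcm}$ conditions ensuring that each $\delta_j$ is integral and lies in $\Gamma_1$, and that the permutation is trivial when required — is elementary bookkeeping; for the integral-weight precedents and the half-integral-weight machinery I would refer to the standard treatments (Diamond--Shurman for $U_M$, $V_M$ and twists, and Shimura/Ono for half-integral weight).
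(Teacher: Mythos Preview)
Your overall strategy---write each operator as a finite sum $\sum_j c_j\,f|_k\alpha_j$ and check that right action by $\gamma\in\Gamma_2$ permutes the $\alpha_j$ modulo left $\Gamma_1$---is exactly the mechanism underlying the paper's proof as well: the paper packages (1) as the double coset $[\Gamma_1\alpha\Gamma_2]$ with $\alpha=\bigl(\begin{smallmatrix}1/M&0\\0&1\end{smallmatrix}\bigr)$ and coset representatives $\beta_j=\bigl(\begin{smallmatrix}1/M&j/M\\0&1\end{smallmatrix}\bigr)$, which is the same content in different language. For (3) the paper simply cites Zindulka, and for (4) it reduces the twist to a combination of sieving operators, whereas you give direct arguments for both; your explicit treatment of the half-integral case via the theta multiplier cocycle is also more than the paper writes down. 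So on (1), (3), (4) your approach is correct and essentially equivalent, just organized with fewer reductions.

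There is, however, a genuine error in your treatment of (2). The paper's $V_M$ is \emph{not} the standard $f\mapsto f(M\tau)$; by its Definition, $(f|V_M)(\tau)=\sum_{n\ge0}a(Mn)q^{Mn}$, i.e.\ $V_M=S_{0,M}$, and the paper's proof of (2) is literally that one-line observation. Your single-matrix argument with $\bigl(\begin{smallmatrix}M&0\\0&1\end{smallmatrix}\bigr)$ computes the wrong operator, and even for that operator the claim ``$\delta\in\Gamma_1$ once $M^2\mid c$ and $MN_2\mid c$'' is off: conjugating gives $\delta=\bigl(\begin{smallmatrix}a&Mb\\c/M&d\end{smallmatrix}\bigr)$, so the actual requirement is $MN_1\mid c$, which yields $\Gamma_0(MN_1)\cap\Gamma_1(N_2)$ rather than the stated $\Gamma_0(\mathrm{lcm}(N_1,M^2,MN_2))\cap\Gamma_1(N_2)$. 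The level $\mathrm{lcm}(N_1,M^2,MN_2)$ in the statement is precisely what falls out of the sieving argument at $m=0$ (where the phases are all $1$, so the permutation $j\mapsto d^2j$ need not be trivial and one can drop the $\Gamma_1(M)$ constraint from (3)). To fix (2), either invoke $V_M=S_{0,M}$ as the paper does, or rerun your sieving computation with $m=0$.
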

\begin{proof}
    we represent above operators with the double coset operators, which are operator of modular forms of weight $k$ between given congruence subgroups of $\text{SL}_2(\mathbb{Z})$. See \cite{Diamond2008AFC} 5.1.3 for more detail of this operator.\\
\rm (1) \it We write down the Fourier series of $f|U_M$:\begin{align*}
f|{U_M}(\tau)&=\sum_{n\geq 0}a(Mn)q^n \\
&=\sum_{n\geq 0}a(n)(\frac{1}{M}\sum_{l=0}^{M-1}e^{\frac{2\pi inl}{M}})e^{\frac{2\pi in\tau}{M}} \\
&=\frac{1}{M}\sum_{l=0}^{M-1}f(\frac{\tau+l}{M})\\
&=\frac{1}{M}\sum_{l=0}^{M-1}f|_k\left(\begin{matrix}\frac{1}{M} & \frac{l}{M}\\0 & 1\end{matrix}\right).
\end{align*}\\Set $\alpha=\left(\begin{matrix}\frac{1}{M} & 0\\0 & 1\end{matrix}\right)$, $\beta_j=\left(\begin{matrix}\frac{1}{M} & \frac{j}{M}\\0 & 1\end{matrix}\right)$. We prove that $\cup_j\Gamma_1\beta_j=\Gamma_1\alpha\Gamma_2$ and for $i\neq j$ $\Gamma_1\beta_i\cap\Gamma_1\beta_j=\emptyset$.\\
Because $\alpha\left(\begin{matrix}1 & j\\0 & 1\end{matrix}\right)=\left(\begin{matrix}\frac{1}{M} & \frac{j}{M}\\0 & 1\end{matrix}\right)=\beta_j$, $\Gamma_1\alpha\Gamma_2\supset\cup_j\Gamma_1\beta_j$. To show $\Gamma_1\alpha\Gamma_2\subset\cup_j\Gamma_1\beta_j$, suggest $\left(\begin{matrix}a & b\\c & d\end{matrix}\right)\in\Gamma_2=\Gamma_0(\text{lcm}(N_1,M))\cap\Gamma_1(N_2)$. Suppose there is a matrix $\left(\begin{matrix}a' & b'\\c' & d'\end{matrix}\right)$ satisfies \[\left(\begin{matrix}\frac{1}{M} & \frac{l}{M}\\0 & 1\end{matrix}\right)\left(\begin{matrix}a & b\\c & d\end{matrix}\right)=\left(\begin{matrix}a' & b'\\c' & d'\end{matrix}\right)\left(\begin{matrix}\frac{1}{M} & \frac{l'}{M}\\0 & 1\end{matrix}\right),\] we find out that \[\begin{cases}
    c'=Mc,\\
    a'=a+lc,\\
    d'=d-l'c,\\
    b'=\frac{b+ld-l'(a+lc)}{M}.
\end{cases}\]
Using $ad-bc=1$ and $c\equiv 0\mod M$ we conclude that $ad\equiv1\mod M$. Therefore take $l'=bd+ld^2$, $b'$ becomes an integer. Further check indicate that $\left(\begin{matrix}a' & b'\\c' & d'\end{matrix}\right)\in\Gamma_1$, therefore $\Gamma_1\alpha\Gamma_2\subset\cup_j\Gamma_1\beta_j$.\\
Suppose $\left(\begin{matrix}a & b\\c & d\end{matrix}\right),\left(\begin{matrix}a' & b'\\c' & d'\end{matrix}\right)\in\Gamma_1$ and $\left(\begin{matrix}a & b\\c & d\end{matrix}\right)\left(\begin{matrix}\frac{1}{M} & \frac{l}{M}\\0 & 1\end{matrix}\right)=\left(\begin{matrix}a' & b'\\c' & d'\end{matrix}\right)\left(\begin{matrix}\frac{1}{M} & \frac{l'}{M}\\0 & 1\end{matrix}\right)$. We get $a=a'$, $c=c'$, and $\begin{cases}
    \frac{a}{M}(l-l')=b'-b,\\
    \frac{c}{M}(l-l')=d'-d,.\end{cases}$. $l-l'$ must be 0, otherwise $\frac{a-c}{M}(l-l')\in\mathbb{Z}$, contradict with $(a,c)=1$.\\
    Therefore $MU_M=\Gamma_1\alpha\Gamma_2$ is a double coset operator sending $\mathcal{M}_k(\Gamma_1)$ to $\mathcal{M}_k(\Gamma_2)$.
\\
\rm (2) \it $V_M$=$S_{0,M}$, and the proof of \rm (2) \it is deduced from below.\\
\rm (3) \it 
See \cite{zindulka2024sumshurwitzclassnumbers} 3.5.\\
\rm (4) \it Decompose $\cdot\otimes\chi$ into sum of sieving operators.
\end{proof}

The key tool we use in the proof is introduced by Rankin-Cohen bracket, which is defined as follow:
\begin{definition}
    For modular form $f$ transforms like modular form of weight $k\in\frac{1}{2}\mathbb{Z}$, $g$ transforms like modular form of weight $l\in\frac{1}{2}\mathbb{Z}$, the Rankin-Cohen bracket of $(f,g)$ is defined by \[[f,g]_n=[f,g]^{(k,l)}_n=\sum_{r+s=n}(-1)^s\tbinom{k+n-1}{r}\tbinom{l+n-1}{s}f^{(s)}(\tau)g^{(r)}(\tau),\] where $f^{(s)}(\tau)=(\frac{1}{2\pi i}\frac{d}{d\tau})^sf$. Then $f$ transforms like modular form of weight $k+l+2n$\cite{Cohen1975SumsIT}.
\end{definition}

Beside the Rankin-Cohen bracket of two modular forms, we may also consider the Rankin-Cohen bracket of one mock modular form and a modular form. Using this operator, we are able to express the $q$-series generated by $H_{m,M}(n)$. Define \[\mathcal{H}_{m,M}(\tau):=\sum_{n\geq 0}H_{m,M}(n)q^n\] and let $\theta_{m,M}(\tau):=\sum_{\substack{n\in\mathbb{Z}\\[1pt]n\equiv m\mod M}}q^{n^2}$ to be the "modified" theta series. Then, we can represent $\mathcal{H}_{m,M}$ using $\mathcal{H}$ and $\theta_{m,M}$ and the Rankin-Cohen bracket of level 0:\[
\mathcal{H}_{m,M}(\tau)=(\mathcal{H}\theta_{m,M})|U_4.
\]This formula indicates $\mathcal{H}_{m,M}(\tau)$ is a mixed mock modular form. To compute the coefficients of $\mathcal{H}_{m,M}(\tau)$ effectively, its worthwhile to complete this mixed mock modular form. We refer to  B. Kane AND S. Pujahari's work\cite{kane2022distributionmomentshurwitzclass}:
\begin{theorem}\label{27}
    Set (The sum symbol with * means the term $s=0$ takes weight $\frac{1}{2})$\[\lambda_{l,m,M}(n)=\sum_{\pm}\sum_{{\substack{t>s\geq 0\\[1pt]t^2-s^2=n\\[1pt]t\equiv \pm m\mod M}}}^{\quad\quad*}(t-s)^l\] and \[\Lambda_{l,m,M}(\tau)=\sum_{n\geq q}\lambda_{l,m,M}(n)q^n.\]For $k\in\mathbb{Z}_{\geq 0}$, $m\in\mathbb{Z}$ and $M\in\mathbb{Z}_{> 0}$, the function \begin{equation}\label{h0}
        ([\mathcal{H},\theta_{m,M}]_k+2^{-1-2k}\binom{2k}{k}\Lambda_{2k+1,m,M})|U_4
    \end{equation}
    is a holomorphic cusp form of weight $2+2k$ on $\Gamma_0(4M^2)\cap\Gamma_1(M)$ if $M\nmid m$ and $\Gamma_0(4M^2)$ if $M\mid m$ if $k>0$; if $k=0$, the function is quasimodular on that congruence subgroup.
\end{theorem}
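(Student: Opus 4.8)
The plan is to realize the function $\big([\mathcal H,\theta_{m,M}]_{k}+2^{-1-2k}\binom{2k}{k}\Lambda_{2k+1,m,M}\big)\big|U_{4}$ as the image under $U_{4}$ of the holomorphic projection of a genuine --- though non-holomorphic --- modular form of weight $2+2k$, namely the Rankin--Cohen bracket of the completion $\mathcal F$ of $\mathcal H$ with $\theta_{m,M}$.

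First I would assemble the completed bracket. By Zagier's theorem recalled above, $\mathcal F=\mathcal H+\mathcal H^{-}$, with $\mathcal H^{-}(\tau)=(\operatorname{Im}\tau)^{-1/2}\sum_{f\in\mathbb Z}\beta\!\big(4\pi f^{2}\operatorname{Im}\tau\big)q^{-f^{2}}$, transforms like a modular form of weight $\tfrac32$ on $\Gamma_{0}(4)$ and is harmonic with shadow $\theta$; the $f\neq0$ terms of $\mathcal H^{-}$ decay exponentially and the $f=0$ term contributes only $O(y^{-1/2})$. Writing $n=m+M\ell$ exhibits $\theta_{m,M}$ as a holomorphic modular form of weight $\tfrac12$ on $\Gamma_{0}(4M^{2})\cap\Gamma_{1}(M)$, and on $\Gamma_{0}(4M^{2})$ when $M\mid m$ (then $\theta_{m,M}=\theta(M^{2}\tau)$). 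Since Cohen's Rankin--Cohen identity is a formal consequence of the chain rule for $\tfrac1{2\pi i}\tfrac{d}{d\tau}$ --- the correction terms proportional to $c/(c\tau+d)$ cancel in his combination whether or not the inputs are holomorphic --- the bracket $[\mathcal F,\theta_{m,M}]_{k}$ transforms like a (non-holomorphic) modular form of weight $\tfrac32+\tfrac12+2k=2+2k$ on $\Gamma_{0}(4M^{2})\cap\Gamma_{1}(M)$, resp. $\Gamma_{0}(4M^{2})$ when $M\mid m$.

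Next I would apply the holomorphic projection operator $\pi_{\mathrm{hol}}$, which is legitimate since the weight $2+2k\ge2$ and since $\mathcal F$, $\theta_{m,M}$, and hence the bracket, have at most polynomial growth at every cusp; for $k=0$ one uses the regularized $\pi_{\mathrm{hol}}$, and this regularization is exactly what makes the $k=0$ output quasimodular rather than modular. Splitting $\mathcal F=\mathcal H+\mathcal H^{-}$ and using that $\pi_{\mathrm{hol}}$ fixes holomorphic forms of moderate growth yields
\[
\pi_{\mathrm{hol}}\!\big([\mathcal F,\theta_{m,M}]_{k}\big)=[\mathcal H,\theta_{m,M}]_{k}+\pi_{\mathrm{hol}}\!\big([\mathcal H^{-},\theta_{m,M}]_{k}\big),
\]
so the left-hand side is a holomorphic (quasi)modular form of weight $2+2k$ on the stated group, and the problem reduces to identifying $\pi_{\mathrm{hol}}\!\big([\mathcal H^{-},\theta_{m,M}]_{k}\big)$. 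For this I would insert the Fourier expansions of $\mathcal H^{-}$ and of $\theta_{m,M}=\sum_{t\equiv m(M)}q^{t^{2}}$, expand the bracket over these terms, and apply the Gross--Zagier holomorphic-projection formula, which expresses the $n$th holomorphic coefficient as $\tfrac{(4\pi n)^{1+2k}}{(2k)!}$ times a Laplace-type integral $\int_{0}^{\infty}(\,\cdot\,)\,e^{-2\pi n y}\,y^{2k}\,dy$ of the corresponding non-holomorphic Fourier coefficient. The non-holomorphic exponents $-f^{2}$ and the theta exponents $t^{2}$ combine to $t^{2}-f^{2}=n$, so only pairs with $t>|f|=:s\ge0$ contribute; after separating the classes $t\equiv\pm m\pmod M$ and evaluating the resulting incomplete-Gamma/Beta integrals, the $n$th coefficient collapses to $2^{-1-2k}\binom{2k}{k}\lambda_{2k+1,m,M}(n)$, the boundary term $s=0$ acquiring half weight exactly as dictated by the constant term of the Eichler integral of $\theta$. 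Hence $\pi_{\mathrm{hol}}\!\big([\mathcal H^{-},\theta_{m,M}]_{k}\big)=2^{-1-2k}\binom{2k}{k}\Lambda_{2k+1,m,M}$, so $[\mathcal H,\theta_{m,M}]_{k}+2^{-1-2k}\binom{2k}{k}\Lambda_{2k+1,m,M}$ is a holomorphic (quasi)modular form of weight $2+2k$ on $\Gamma_{0}(4M^{2})\cap\Gamma_{1}(M)$, resp. $\Gamma_{0}(4M^{2})$.

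Finally I would push through $U_{4}$: by Proposition (1), since $4\mid4M^{2}$, $U_{4}$ maps weight-$(2+2k)$ forms on $\Gamma_{0}(4M^{2})\cap\Gamma_{1}(M)$ to the same group (resp. $\Gamma_{0}(4M^{2})$) and preserves quasimodularity, giving the claimed statement for $k=0$ and reducing the case $k>0$ to checking cuspidality. At $\infty$ the constant term vanishes: a surviving constant term of $[\mathcal H,\theta_{m,M}]_{k}$ would require both factors in some product summand to be undifferentiated, impossible when $r+s=k\ge1$, while $\Lambda_{2k+1,m,M}$ has no constant term since $t^{2}-s^{2}=0$ with $t>s\ge0$ cannot occur; $U_{4}$ preserves this. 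At the remaining cusps one combines the modular transformation of the completed form $\pi_{\mathrm{hol}}\!\big([\mathcal F,\theta_{m,M}]_{k}\big)$ with the exponential decay of $\mathcal H^{-}$ and the holomorphy of $\theta_{m,M}$ at the cusps to see that the completion --- hence its holomorphic projection --- is bounded there and has vanishing constant term. The main obstacle is the computation in the previous paragraph: carrying out the holomorphic-projection integrals, justifying the interchange of summation and integration, and tracking the $\pm m$ bookkeeping together with the half-weighting of the $s=0$ term precisely enough that the constant $2^{-1-2k}\binom{2k}{k}$ and the exponent $2k+1$ emerge exactly --- compounded, when $k=0$, by the need for the regularized weight-$2$ holomorphic projection, which is the source of the quasimodularity.
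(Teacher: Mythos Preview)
The paper does not prove this theorem at all: it is quoted verbatim from Kane and Pujahari~\cite{kane2022distributionmomentshurwitzclass} (see the sentence immediately preceding the statement, ``We refer to B.~Kane and S.~Pujahari's work''), and is used in the paper purely as a black box to feed into the Sturm-bound computations of Section~3. So there is no ``paper's own proof'' to compare against.

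That said, your sketch is precisely the strategy carried out in the cited literature (Mertens~\cite{Mertens_2016,mertens2013mockmodularformsclass}, Bringmann--Kane~\cite{BRINGMANN_2018}, and Kane--Pujahari~\cite{kane2022distributionmomentshurwitzclass}): complete $\mathcal H$ to the non-holomorphic $\mathcal F$, form the Rankin--Cohen bracket $[\mathcal F,\theta_{m,M}]_k$ (which genuinely transforms with weight $2+2k$), apply (regularized, when $k=0$) holomorphic projection, and identify $\pi_{\mathrm{hol}}([\mathcal H^{-},\theta_{m,M}]_k)$ with the $\Lambda$-series via the explicit incomplete-Gamma integral. You have correctly located the only real work --- the Fourier-coefficient-by-Fourier-coefficient evaluation of that projection integral, the pairing $t^2-s^2=n$, the half-weight at $s=0$ coming from the constant term of the Eichler integral, and the emergence of $2^{-1-2k}\binom{2k}{k}$ --- and correctly flagged the weight-$2$ regularization as the source of quasimodularity. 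One small caveat: your cuspidality argument at cusps other than $\infty$ is a bit breezy; in the actual references this is handled by checking growth of the completed bracket at all cusps before projecting, rather than afterward, but the idea is the same.
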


Here the concept of quasimodular form is from \text{almost holomorphic modular form}. If a function $F:\mathbb{H}\xrightarrow{}\mathbb{C}$ satisfies weak modularity of weight $k\in\frac{1}{2}\mathbb{Z}$ on some congruent subgroup $\Gamma$ of $\text{SL}_2(\mathbb{Z})$ and there exists holomorphic functions $F_j$ $(0\leq j\leq l)$ such that $F=\sum_{j=0}^{l}\frac{F_j(\tau)}{Im(\tau)^j}$. The term $F_0$ is called a \text{quasimodular form}. For more details of quasimodular form, one may check \cite{quasiintro}.

Note that in the case $k=0$ this theorem shows that \begin{equation}\label{h1}
    (\mathcal{H}_{m,7}+\frac{1}{2}\Lambda_{1,m,7})|U_4
\end{equation} is a quasimodular form of weight $2$ on $\Gamma_0(4\cdot 49)\cap\Gamma_1(7)$ if $m\not\equiv 0 \ (\text{mod} 7)$ or $\Gamma_0(4\cdot 49)$ if $m\equiv0 \ (\text{mod} 7)$. If furthermore we act the sieving operator $S_{a,7}$ $(a\not\equiv 0\mod 7)$ on the quasimodular form, because $S_{m,7}$ vanish the $n=0$ term, \[(\mathcal{H}_{m,7}+\frac{1}{2}\Lambda_{1,m,7})|U_4|S_{a,7} \quad \text{and} \quad (\mathcal{H}_{m,7}+\frac{1}{2}\Lambda_{1,m,7})|U_4\otimes\chi_{-7}^2
\] are modular forms of weight 2 on $\Gamma_0(4\cdot 49)\cap\Gamma_1(7)$ if $m\not\equiv 0 \ (\text{mod} 7)$ or $\Gamma_0(4\cdot 49)$ if $m\equiv0 \ (\text{mod} 7)$, where $\chi_{-7} $ is the non-principal real character with period 7.


In the next chapter, we start calculate the two sides of (\ref{h0}) in Thm\ref{27}. By sieving, we transform (\ref{h1}) into a modular form, which may be represented as sum of several modular forms in the modular space that easy to compute.


\section{TERMS OF FOURIER EXPANSIONS}

In the previous chapter, we obtain a modular form of weight 2 which coefficient of terms of Fourier series is related to $H_{m,M}(n)$ $(n\not\equiv 0\mod 7)$ by sieving. In this chapter, first we compute $\frac{1}{2}\Lambda_{1,m,7}|U_4|S_{a,7}$ $(a\not\equiv 0\mod 7)$, then we are going to write our modular forms as linear combination of series that are easy to compute.

Using the following proposition, one can rewrite $\frac{1}{2}\Lambda_{1,m,7}|U_4$ into a more direct form.

\begin{proposition}\label{31}\cite{mertens2013mockmodularformsclass}
    Let\[D_l^{(p,a)}(\tau):=\sum_{n=1}^{\infty}\phi_l^{(p,a)}(n)q^n,\]
    where\[\phi_l^{(p,a)}(n):=\sum_{\substack{d\mid n\\[1pt]d\leq\sqrt{n}\\[1pt]d\equiv -a\mod p}}d^l+\sum_{\substack{d\mid n\\[1pt]d<\sqrt{n}\\[1pt]d\equiv a\mod p}}d^l.\]Then for $k\in\mathbb{Z}_{\geq 0}$ and $m\neq 0$,\begin{align*}
            \Lambda_{2k+1,m,p}\mid U(4)(\tau)= & 2^{2k+1}[\sum_{a\not\equiv \pm m\mod p}(\frac{1}{2}(D_{2k+1}^{(p,\frac{m-a}{2})}+D_{2k+1}^{(p,\frac{a-m}{2})})\mid S_{p,\frac{m^2-a^2}{4}})(\tau)+\\ & ((D_{2k+1}^{(p,m)}+D_{2k+1}^{(p,-m)})\mid S_{p,0})(\tau)+p^{2k+1}(D_{2k+1}^{(1,0)}\mid V(p))(\tau)],
    \end{align*}
    and for $m=0$, \[
    \Lambda_{2k+1,0,p}\mid U(4)(\tau) =2^{2k+1}[\sum_{a\not\equiv 0\mod p}(D_{2k+1}^{(p,\frac{a}{2})}\mid S_{p,-\frac{a^2}{4}})(\tau) +p^{2k+1}(D_{2k+1}^{(1,0)}\mid V(p^2))(\tau)].
    \]
\end{proposition}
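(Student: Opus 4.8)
The plan is to prove the identity by comparing the coefficient of $q^{n}$ on the two sides for every $n\ge 1$. On the left, this coefficient is $\lambda_{2k+1,m,p}(4n)=\sum_{\pm}\sum_{\substack{t>s\ge 0,\ t^2-s^2=4n\\ t\equiv\pm m \bmod p}}^{*}(t-s)^{2k+1}$. The first step is to reparametrise the solutions of $t^2-s^2=4n$. Since $t-s$ and $t+s$ have the same parity (their sum $2t$ is even) and their product $4n$ is even, both are even; writing $t-s=2u$ and $t+s=2w$ gives a bijection between pairs $(t,s)$ with $t>s\ge 0$ and factorisations $uw=n$ with $0<u\le w$, under which $t=u+w$, the boundary case $s=0$ corresponds to $u=w$, and $(t-s)^{2k+1}=2^{2k+1}u^{2k+1}$. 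Hence the left coefficient equals $2^{2k+1}\sum_{\pm}\sum_{\substack{uw=n,\ u\le w\\ u+w\equiv\pm m \bmod p}}^{*}u^{2k+1}$, which already accounts for the global factor $2^{2k+1}$ on the right and reduces the whole statement to an elementary identity about divisor sums.

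Next I would split according to $\gcd(n,p)$. Suppose first $p\nmid n$; then $u$ and $n/u$ are both invertible mod $p$, and for each divisor $u\mid n$ the residue $u+n/u \bmod p$ is a fixed value, so it equals $m$ for one sign, $-m$ for the other, or neither (here using $p\nmid m$, the case $p\mid m$ being a minor variant). Thus the inner double sum collapses to $\sum_{\substack{u\mid n,\ u\le\sqrt n,\ u+n/u\equiv\pm m}}^{*}u^{2k+1}$. The congruence $u+n/u\equiv\epsilon m \pmod p$ is the quadratic $u^2-\epsilon m u+n\equiv 0$, whose two roots mod $p$ are exactly $\{u\bmod p,\ (n/u)\bmod p\}$; introducing $a$ by $\{u,\ n/u\}\equiv\{\tfrac{m-a}{2},\tfrac{m+a}{2}\}\pmod p$ for $\epsilon=+1$ (and the analogous $\tfrac{a-m}{2}$ for $\epsilon=-1$) one reads off both that $n\equiv\tfrac{m^2-a^2}{4}\pmod p$ and that $a\not\equiv\pm m$, since $a\equiv\pm m$ would force $n\equiv 0\pmod p$. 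Sorting the admissible divisors by their residue class mod $p$ and using the elementary identity $\tfrac12\bigl(\phi_{2k+1}^{(p,b)}(n)+\phi_{2k+1}^{(p,-b)}(n)\bigr)=\sum_{\substack{d\mid n,\ d\le\sqrt n,\ d\equiv\pm b \bmod p}}^{*}d^{2k+1}$ — which is precisely where the asymmetric conditions $d\le\sqrt n$ versus $d<\sqrt n$ in the definition of $\phi_{l}^{(p,a)}$ encode the half-weight attached to $d=\sqrt n$ — gives exactly the first displayed sum over $a\not\equiv\pm m$, sieved by $S_{p,(m^2-a^2)/4}$.

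It then remains to treat $p\mid n$. Writing $n=p^{j}n_0$ with $p\nmid n_0$ and $j\ge 1$, in any factorisation $uw=n$ the prime $p$ divides exactly one of $u,w$, and in fact to the full power $p^{j}$ — otherwise $u+w\equiv 0\not\equiv\pm m \pmod p$. If $p\nmid u$ the condition becomes $u\equiv\pm m$, and summing over such $u\le\sqrt n$ yields the $(D_{2k+1}^{(p,m)}+D_{2k+1}^{(p,-m)})\mid S_{p,0}$ contribution; if $p^{j}\parallel u$ the condition becomes $n/u\equiv\pm m$, and after pulling $p^{j(2k+1)}$ out of $u^{2k+1}$ and reindexing by the cofactor coprime to $p$, this collapses to the $p^{2k+1}(D_{2k+1}^{(1,0)}\mid V_p)$ contribution. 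The case $m=0$ is obtained by rerunning the same argument, the differences being that $+m$ and $-m$ now coincide (so the sign sum is a uniform doubling and no residue class splits in two), that $u+w\equiv 0$ with $p\mid n$ now forces $p\mid u$ and $p\mid w$ and hence $p^{2}\mid n$, which is why $V_{p}$ is replaced by $V_{p^{2}}$ and the $S_{p,0}$ term drops out, and that the parameter range becomes $n\equiv-a^{2}/4\pmod p$. I expect the main obstacle to be exactly this bookkeeping in the $p\mid n$ case: keeping the combinatorial constants straight, matching the strict versus non-strict inequalities in $\phi_{l}^{(p,a)}$ against the half-weighted $s=0$ terms, and verifying that the reindexing in the $p^{j}\parallel u$ sub-case reproduces $D^{(1,0)}\mid V_p$ exactly rather than something off by a power of $p$ or by a factor of $2$.
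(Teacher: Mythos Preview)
The paper does not prove this proposition at all; it is quoted from Mertens \cite{mertens2013mockmodularformsclass} and used as a black box. So there is no in-paper argument to compare your proposal against.

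That said, your outline is the natural and essentially correct route. The reparametrisation $t-s=2u$, $t+s=2w$, giving a bijection between solutions of $t^{2}-s^{2}=4n$ with $t>s\ge 0$ and factorisations $uw=n$ with $0<u\le w$, is exactly right, and it produces the global factor $2^{2k+1}$ immediately. Your treatment of the case $p\nmid n$ is complete in all but the final bookkeeping: passing from $u+n/u\equiv\epsilon m$ to the quadratic $u^{2}-\epsilon m\,u+n\equiv 0\pmod p$ and parametrising its roots as $\tfrac{m\pm a}{2}$ is precisely how each small divisor $u\le\sqrt n$ is matched to a unique residue class $a\not\equiv\pm m$ with $n\equiv(m^{2}-a^{2})/4$, and your identity
\[
\tfrac12\bigl(\phi_{2k+1}^{(p,b)}(n)+\phi_{2k+1}^{(p,-b)}(n)\bigr)=\sum_{\substack{d\mid n,\ d\le\sqrt n\\ d\equiv\pm b\ (p)}}^{*}d^{2k+1}
\]
is exactly what converts the sieved $D$-sums into the half-weighted divisor sums coming from the starred $\Lambda$.

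The one place your sketch is not yet right is the $p\mid n$ sub-case $p^{j}\parallel u$. You propose pulling out $p^{j(2k+1)}$ and reindexing to land on $p^{2k+1}D_{2k+1}^{(1,0)}\mid V_{p}$, but for $j\ge 2$ those exponents do not match, and the range $u\le\sqrt n$ becomes $v\le\sqrt{n/p^{2j}}$ rather than $v\le\sqrt{n/p}$. The two right-hand terms $(D^{(p,m)}+D^{(p,-m)})\mid S_{p,0}$ and $p^{2k+1}D^{(1,0)}\mid V_{p}$ do not cleanly correspond to your Cases~A and~B; one has to combine the contributions across all $p$-adic valuations of $u$ and track a cancellation. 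You correctly flag this as the delicate point, but the mechanism you describe would give the wrong power of $p$ when $j\ge 2$, so that step still needs to be worked out.
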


The coefficients of the right side of $\frac{1}{2}\Lambda_{1,m,7}|U_4\otimes\chi_{-7}^2$ now becomes easily computable, especially the prime terms' coefficients. 

Now we are going to consider $(\mathcal{H}_{m,7}+\frac{1}{2}\Lambda_{1,m,7})|U_4|S_{a,7}$ or $(\mathcal{H}_{m,7}+\frac{1}{2}\Lambda_{1,m,7})|U_4\otimes\chi_{-7}^2$ in the finite dimensional linear space $\mathcal{M}_2(\Gamma)$, where $\Gamma=\Gamma_0(4\cdot 49)\cap\Gamma_1(7)$ if $m\not\equiv 0 \ (\text{mod} 7)$ or $\Gamma=\Gamma_0(4\cdot 49)$ if $m\equiv0 \ (\text{mod} 7)$. We introduce several modular forms in $\mathcal{M}_2(\Gamma)$ for preparation.

Let \[
D(\tau):=\sum_{n=1}^{\infty}\sigma(n)q^n,
\] where $\sigma(n)=\sum_{d\mid n, d>0}d$.
\begin{proposition}
    $D(\tau)\mid S_{m,M}$ is a cusp form of weight 2 in $\Gamma_0(M^2)$ if $m\neq 0$.
\end{proposition}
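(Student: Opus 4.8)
The plan is to reduce $D$ to the weight-$2$ Eisenstein series $E_2$ and then pass the sieving operator through the transformation law of the non-holomorphic completion of $E_2$; the fact that $E_2$ is merely quasimodular will be harmless precisely because $m\not\equiv0\pmod M$.

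Concretely, I would write $D(\tau)=\tfrac1{24}\bigl(1-E_2(\tau)\bigr)$, where $E_2(\tau)=1-24\sum_{n\ge1}\sigma(n)q^n$, and recall that $E_2^{\ast}(\tau):=E_2(\tau)-\tfrac{3}{\pi\,\mathrm{Im}\,\tau}$ satisfies $E_2^{\ast}|_2\gamma=E_2^{\ast}$ for every $\gamma\in\mathrm{SL}_2(\mathbb Z)$. Writing the sieving operator as $(f|S_{m,M})(\tau)=\tfrac1M\sum_{j=0}^{M-1}e^{-2\pi ijm/M}f\bigl(\tau+\tfrac jM\bigr)$, observe that when $m\not\equiv0\pmod M$ the constant term of $D$ (and the stray constant $\tfrac1{24}$ coming from $E_2$) is killed, and so is the term $\tfrac{3}{\pi\,\mathrm{Im}\,\tau}$, because $\mathrm{Im}(\tau+j/M)$ is independent of $j$ while $\sum_j e^{-2\pi ijm/M}=0$. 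Hence $D|S_{m,M}=-\tfrac1{24}\,E_2^{\ast}|S_{m,M}$, and this function is holomorphic — it is literally the subseries of $-\tfrac1{24}E_2$ supported on $n\equiv m\pmod M$. Since the double-coset computation proving the statement for $S_{m,M}$ in the earlier Proposition uses only the weight-$2$ automorphy of $E_2^{\ast}$ under $\mathrm{SL}_2(\mathbb Z)$, and never holomorphy, it applies verbatim with $N_1=N_2=1$ and shows that $E_2^{\ast}|S_{m,M}$, hence $D|S_{m,M}$, transforms with weight $2$ on $\Gamma_0(M^2)\cap\Gamma_1(M)$. At this stage $D|S_{m,M}$ is already a holomorphic weight-$2$ modular form on that group, with vanishing constant term at the cusp $\infty$.

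What is left — descending to $\Gamma_0(M^2)$ with trivial nebentypus, and checking vanishing at every cusp — is the heart of the matter. For the nebentypus, a short computation gives $(E_2^{\ast}|S_{m,M})|_2\gamma=E_2^{\ast}|S_{a^2m,M}$ for $\gamma\in\Gamma_0(M^2)$ with upper-left entry $a$, which describes the $\Gamma_0(M^2)$-action on the span of the forms $D|S_{m',M}$ and lets one decide whether $D|S_{m,M}$ is an eigenvector with trivial character. For cuspidality I would compute the constant term of $D|S_{m,M}$ at each cusp $a/c$ with $c\mid M^2$ by iterating $E_2(-1/\tau)=\tau^2E_2(\tau)+\tfrac{6\tau}{\pi i}$; this is the same as identifying the Eisenstein component of $D|S_{m,M}$ in the basis $\{E_2(\tau)-dE_2(d\tau):d\mid M^2\}$ together with the relevant twisted Eisenstein series $E_2\otimes\chi$, and the assertion then amounts to the vanishing of that component. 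I expect this last step to be the main obstacle: the Eisenstein component of $D|S_{m,M}$ need not vanish for general $M$, so one must carry out the constant-term bookkeeping explicitly at level $M^2$ — in the case $M=7$ needed here, across the eight cusps of $X_0(49)$ — to isolate the genuinely cuspidal part.
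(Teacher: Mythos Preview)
Your approach is essentially the same as the paper's, with one cosmetic difference: instead of sieving $E_2^{\ast}$ directly, the paper sieves the genuine holomorphic form
\[
G_{2,M}(\tau)\;=\;G_2(\tau)-M\,G_2(M\tau)\ \in\ \mathcal{M}_2(\Gamma_0(M)),
\]
and then observes that for $m\not\equiv 0\pmod M$ both the constant term and the piece $M\,G_2(M\tau)$ (which is supported on exponents $\equiv 0\pmod M$) are annihilated by $S_{m,M}$, so that $G_{2,M}\mid S_{m,M}$ is a nonzero scalar times $D\mid S_{m,M}$. This is exactly your ``the stray constant and the $3/(\pi\,\mathrm{Im}\,\tau)$ term die'' argument, packaged so that one never leaves the holomorphic category; the upshot is the same.

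Your worries about the two remaining steps are well founded --- and in fact the paper does not carry them out either. Applying the paper's own Proposition on $S_{m,M}$ to $G_{2,M}\in\mathcal{M}_2(\Gamma_0(M))$ only yields
\[
D\mid S_{m,M}\ \in\ \mathcal{M}_2\bigl(\Gamma_0(M^2)\cap\Gamma_1(M)\bigr),
\]
not $\mathcal{M}_2(\Gamma_0(M^2))$, and nothing in the paper's proof addresses cuspidality. Indeed the ``cusp form'' claim is false as stated: already for $M=2$ one has $S_2(\Gamma_0(4))=\{0\}$ while $D\mid S_{1,2}=q+4q^3+6q^5+\cdots\neq 0$. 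What the paper actually \emph{uses} later (in the proof of Theorem~\ref{35}) is only that $D\mid S_{m,7}\in\mathcal{M}_2(\Gamma_0(4\cdot 49)\cap\Gamma_1(7))$, which is precisely what both your argument and the paper's argument establish. So you can stop where you already are; the Eisenstein/cusp bookkeeping you anticipate as ``the main obstacle'' is not needed for the application, and would in general fail anyway.
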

\begin{proof}
    Set\[G_2(\tau)=\sum_{c\in\mathbb{Z}}\sum_{d\in\mathbb{Z}-\{0\}}\frac{1}{(c\tau+d)^2}=2\zeta(2)-8\pi^2\sum_{n=1}^{\infty}\sigma(n)q^n.\] The function $G_2(\tau)-\pi/\text{Im}(\tau)$ is weight-2 invariant under $\text{SL}_2(\mathbb{Z})$\cite{Diamond2008AFC}. Set\[
    G_{2,N}(\tau) =G_2(\tau)-NG_2(N\tau),
    \] Then $G_{2,N}(\tau)=(NG_2(N\tau)-\pi/\text{Im}(\tau))-(G_2(\tau)-N\pi/\text{Im}(N\tau))$ satisfies weight-2 invariant under $\Gamma_0(N)$. Therefore $G_{2,M}(\tau)\mid S_{m,M}\in \mathcal{M}_2(\Gamma_0(M^2))$, so does $D$.
\end{proof}

The cusp form space $S_{2}^{\mathrm{new}}(\Gamma_0(49))$ has one dimension. We denote the newform in $S_{2}^{\mathrm{new}}(\Gamma_0(49))$ to be
        \[G:= q +  q^{2} -  q^{4} - 3 q^{8} - 3 q^{9} + O(q^{10}).\]
This newform, labeled as 49.2.a.a, has complex multiplication on $\mathbb{Q}(\sqrt{-7})$, is associated with elliptic curve $y^2+xy=x^3-x^2-2x-1$. This $G$ will appear in the representation of $(\mathcal{H}_{m,7}+\frac{1}{2}\Lambda_{1,m,7})|U_4|S_{a,7}$ and $(\mathcal{H}_{m,7}+\frac{1}{2}\Lambda_{1,m,7})|U_4\otimes\chi_{-7}^2$ later.

In the finite dimensional linear space $\mathcal{M}_2(\Gamma)$, two modular forms are the same if their coefficients of the first $B$ terms agree with each other, where $B$ is an integer that only depend on $k$ and $\Gamma$. More precisely, we introduce the Sturm bound\cite{sturmbound1}\cite{sturmbound2}:
\begin{proposition}
    For any space $\mathcal{M}_k(N,\chi)$ of modular forms of weight $k$, level $N$, and character $\chi$, the Sturm bound is the integer\[
B(\mathcal{M}_k(N,\chi)) := \left\lfloor \frac{km}{12}\right\rfloor,\] where $
m:=[\text{SL}_2(\mathbb{Z}):\Gamma]$.\\
Suppose $f=\sum_{n\ge 0}a_n q^n\in \mathcal{M}_k(N,\chi)$ and $g=\sum_{n\ge 0}b_n q^n\in \mathcal{M}_k(N,\chi)$. If $a_n=b_n$ for all $n\le B(M_k(N,\chi))$ then $f=g$.
\end{proposition}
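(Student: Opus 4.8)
The plan is to reduce to a vanishing statement and then apply the valence (``$k/12$'') formula on $\mathrm{SL}_2(\mathbb{Z})$ to a ``norm'' of $f$; this is the classical argument of Sturm, cf. \cite{sturmbound1, sturmbound2}. First I would put $h:=f-g=\sum_{n\ge 0}c_nq^n\in\mathcal{M}_k(N,\chi)$, so that it suffices to show: if $c_n=0$ for all $n\le B:=B(\mathcal{M}_k(N,\chi))=\lfloor km/12\rfloor$ then $h=0$. Suppose, to the contrary, that $h\ne 0$.

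The key step is to build from $h$ a genuine modular form on the full modular group. Let $\gamma_1=\mathrm{id},\gamma_2,\dots,\gamma_m$ be right coset representatives for $\Gamma\backslash\mathrm{SL}_2(\mathbb{Z})$, where $\Gamma$ is the congruence subgroup on which $h$ transforms (with trivial multiplier; in the applications here $\Gamma=\Gamma_0(4\cdot 49)\cap\Gamma_1(7)$, and for a general nebentypus one passes to $\Gamma_1(N)$ or replaces $F$ below by a fixed power), and set
\[
F:=\prod_{i=1}^{m}\bigl(h\,|_k\,\gamma_i\bigr).
\]
Right multiplication by any $\delta\in\mathrm{SL}_2(\mathbb{Z})$ permutes the cosets $\Gamma\gamma_i$, so it permutes the factors of $F$ (up to roots of unity coming from $\chi$, if present); hence $F$ — or a fixed power $F^{t}$ with $t=\mathrm{ord}(\chi)$ — transforms like a holomorphic modular form of weight $\kappa:=km$ (resp. $tkm$) on $\mathrm{SL}_2(\mathbb{Z})$. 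Holomorphy at the single cusp follows from the holomorphy of $h$ at all cusps of $\Gamma$, which makes every factor $h|_k\gamma_i$ have a $q$-expansion with no principal part, and $F\ne 0$ because holomorphic functions on $\mathbb{H}$ have no zero divisors.

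Now I would bound $\operatorname{ord}_\infty(F)$ from both sides. Since the factor indexed by $\gamma_1=\mathrm{id}$ equals $h$, whose $q$-order is at least $B+1$ by hypothesis, while every other factor has nonnegative $q$-order, additivity of the order over products gives $\operatorname{ord}_\infty(F)\ge B+1=\lfloor km/12\rfloor+1>km/12$. On the other hand, for a nonzero $F\in M_\kappa(\mathrm{SL}_2(\mathbb{Z}))$ the valence formula
\[
\operatorname{ord}_\infty(F)+\tfrac12\operatorname{ord}_i(F)+\tfrac13\operatorname{ord}_\rho(F)+\sum_{P}\operatorname{ord}_P(F)=\frac{\kappa}{12}
\]
(the last sum over the remaining $\mathrm{SL}_2(\mathbb{Z})$-inequivalent points of $\mathbb{H}$) forces $\operatorname{ord}_\infty(F)\le\kappa/12=km/12$. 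These two bounds contradict each other, so $h=0$, i.e. $f=g$. If a power $F^{t}$ was needed, the same comparison applies since $\operatorname{ord}_\infty(F^{t})\ge t(\lfloor km/12\rfloor+1)>tkm/12$, while the weight of $F^{t}$ is $tkm$, so the valence formula gives $\operatorname{ord}_\infty(F^{t})\le tkm/12$.

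The only genuinely fussy point is the nebentypus bookkeeping: one must make sure the norm $F$ (or $F^{t}$) really is invariant under all of $\mathrm{SL}_2(\mathbb{Z})$ with the stated weight $km$, and that the index $m$ in the bound matches the group on which $f$ actually transforms. For the modular forms used in this paper the character is trivial and $F$ itself works; everything else is the classical valence formula together with the holomorphy of $f$ at the cusps.
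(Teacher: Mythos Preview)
The paper does not actually prove this proposition; it simply states the Sturm bound and refers to \cite{sturmbound1,sturmbound2} for a proof. Your argument is precisely the classical norm-and-valence-formula argument of Sturm contained in those references, and it is correct as written (including the nebentypus caveat you already flag), so you have in effect supplied the proof the paper only cites.
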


\begin{lemma}
    \[\text{SL}_2(\mathbb{Z}):\Gamma_0(N_1)\cap\Gamma_1(N_2)=N_1\prod_{p\mid N_1}(1+\frac{1}{p})\phi(N_2),\] where $\phi(N_1)=\#(\mathbb{Z}/N\mathbb{Z})$.
\end{lemma}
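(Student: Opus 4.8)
The plan is to compute the index through the chain of subgroups
\[
\Gamma_0(N_1)\cap\Gamma_1(N_2)\ \leq\ \Gamma_0(N_1)\ \leq\ \mathrm{SL}_2(\mathbb{Z}),
\]
which makes sense because in all the applications $N_2\mid N_1$, so that $\Gamma_1(N_2)\supseteq\Gamma_1(N_1)$ and hence $\Gamma_0(N_1)\cap\Gamma_1(N_2)$ is a finite-index subgroup of $\Gamma_0(N_1)$; I take this divisibility as a standing hypothesis of the lemma. By the tower law for indices,
\[
[\mathrm{SL}_2(\mathbb{Z}):\Gamma_0(N_1)\cap\Gamma_1(N_2)]=[\mathrm{SL}_2(\mathbb{Z}):\Gamma_0(N_1)]\cdot[\Gamma_0(N_1):\Gamma_0(N_1)\cap\Gamma_1(N_2)].
\]
For the first factor I would quote the classical value $[\mathrm{SL}_2(\mathbb{Z}):\Gamma_0(N)]=N\prod_{p\mid N}(1+\tfrac1p)$ from \cite{Diamond2008AFC}; alternatively one gets it from the transitive action of $\mathrm{SL}_2(\mathbb{Z})$ on $\mathbb{P}^1(\mathbb{Z}/N\mathbb{Z})$, whose point $[1:0]$ has stabiliser equal to the image of $\Gamma_0(N)$, together with $\#\mathbb{P}^1(\mathbb{Z}/N\mathbb{Z})=N\prod_{p\mid N}(1+\tfrac1p)$.

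The content is the second factor. I would introduce the map $\varepsilon\colon\Gamma_0(N_1)\to(\mathbb{Z}/N_2\mathbb{Z})^{\times}$ sending $\left(\begin{smallmatrix}a&b\\c&d\end{smallmatrix}\right)$ to the residue of $d$ modulo $N_2$. It is well defined and a homomorphism: if $c\equiv 0\pmod{N_1}$ then $c\equiv 0\pmod{N_2}$, so $ad\equiv 1\pmod{N_2}$, making $d$ a unit, and the bottom-right entry of a product of two $\Gamma_0(N_1)$-matrices is congruent modulo $N_2$ to the product of their bottom-right entries. It is surjective, since already $\Gamma_0(N_1)\to(\mathbb{Z}/N_1\mathbb{Z})^{\times}\to(\mathbb{Z}/N_2\mathbb{Z})^{\times}$ is: given $d$ a unit mod $N_1$, choose $a$ with $ad\equiv 1\pmod{N_1}$; then $\left(\begin{smallmatrix}a&(ad-1)/N_1\\N_1&d\end{smallmatrix}\right)\in\Gamma_0(N_1)$. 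Finally $\ker\varepsilon$ is exactly $\Gamma_0(N_1)\cap\Gamma_1(N_2)$: if $\gamma\in\Gamma_0(N_1)$ has $d\equiv 1\pmod{N_2}$, then $c\equiv 0\pmod{N_2}$ and $a\equiv d^{-1}\equiv 1\pmod{N_2}$, so $\gamma\in\Gamma_1(N_2)$, while the reverse inclusion is immediate. Hence $[\Gamma_0(N_1):\Gamma_0(N_1)\cap\Gamma_1(N_2)]=\#(\mathbb{Z}/N_2\mathbb{Z})^{\times}=\phi(N_2)$.

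Multiplying the two factors yields $[\mathrm{SL}_2(\mathbb{Z}):\Gamma_0(N_1)\cap\Gamma_1(N_2)]=N_1\prod_{p\mid N_1}(1+\tfrac1p)\,\phi(N_2)$, as claimed; here $\phi$ is Euler's totient $\phi(N)=\#(\mathbb{Z}/N\mathbb{Z})^{\times}$, which is the intended reading of the side condition in the statement. There is no real obstacle; the only point requiring attention is that the hypothesis $N_2\mid N_1$ enters twice — in the well-definedness of $\varepsilon$ and in the identification of its kernel — and that it indeed holds in every instance where the lemma is applied in Sections 2 and 3 (for example $\Gamma_0(4M^2)\cap\Gamma_1(M)$).
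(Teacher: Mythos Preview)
Your argument is correct and is the standard way to establish this index formula. Note, however, that the paper itself states this lemma without proof---it is quoted as a known computational fact used only to evaluate the Sturm bound---so there is no ``paper's own proof'' to compare against. Your tower-law approach, together with the surjective homomorphism $\Gamma_0(N_1)\to(\mathbb{Z}/N_2\mathbb{Z})^{\times}$ sending a matrix to its lower-right entry, is exactly the expected justification, and your observation that the implicit hypothesis $N_2\mid N_1$ is needed (and always satisfied in the paper's applications, e.g.\ $\Gamma_0(4\cdot 49)\cap\Gamma_1(7)$) is a genuine clarification the paper omits. Your reading of $\phi$ as Euler's totient $\#(\mathbb{Z}/N\mathbb{Z})^{\times}$, rather than the literal $\#(\mathbb{Z}/N\mathbb{Z})$ written in the side condition, is also the correct interpretation and worth flagging as a typo in the statement.
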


In the case $\Gamma=\Gamma_0(4\cdot 49)\cap \Gamma_1(7)$, $B(\mathcal{M}_2(\Gamma))=336$, and in the case $\Gamma=\Gamma_0(4\cdot 49)$, $B(\mathcal{M}_2(\Gamma))=56$.Now we start to describe the modular form version of our main theorem. Note that the $m=0$ part is already computed in \cite{BRINGMANN_2018} and \cite{Mertens_2016}.

\begin{theorem}\label{35} Let $\chi_{-7}$ be the non-principal character modulo 7.\\
    \rm (1)\it We have the following equation in $\mathcal{M}_2(\Gamma_0(4\cdot 49))$:
    \[(\mathcal{H}\theta_{0,7})\mid U_4\otimes\chi_{-7}^2+2D_1^{(7,1)}\mid S_{7,6}+2D_1^{(7,2)}\mid S_{7,3}+2D_1^{(7,3)}\mid S_{7,5}\]\[=\frac{1}{4}D\otimes\chi_{-7}^2+\frac{1}{24}D\otimes \chi_{-7}\otimes(\chi_7-1)+\frac{1}{4}G\otimes\chi_{-7}^2.\]
    \rm (2)\it We have the following equation in $\mathcal{M}_2(\Gamma_0(4\cdot 49)\cap\Gamma_1(7))$:\\
    \[\begin{cases}
        (\mathcal{H}\theta_{1,7})\mid U_4\mid S_{7,1}+D_1^{(7,3)}\mid S_{7,1}+D_1^{(7,5)}\mid S_{7,1}=\frac{1}{3}D\mid S_{7,1},\\
        (\mathcal{H}\theta_{1,7})\mid U_4\mid S_{7,2}+\frac{1}{2}D_1^{(7,3)}\mid S_{7,2}+\frac{1}{2}D_1^{(7,4)}\mid S_{7,2} \\
        \quad \quad =\frac{7}{24}D\mid S_{7,2}+\frac{1}{8}G\mid S_{7,2},\\
        (\mathcal{H}\theta_{1,7})\mid U_4\mid S_{7,3}=\frac{1}{4}D\mid S_{7,3},\\
        (\mathcal{H}\theta_{1,7})\mid U_4\mid S_{7,4}=\frac{1}{4}D\mid S_{7,4}-\frac{1}{4}G\mid S_{7,4},\\
        (\mathcal{H}\theta_{1,7})\mid U_4\mid S_{7,5}+D_1^{(7,6)}\mid S_{7,5}+D_1^{(7,2)}\mid S_{7,5}=\frac{1}{3}D\mid S_{7,5},\\
        (\mathcal{H}\theta_{1,7})\mid U_4\mid S_{7,6}=\frac{1}{4}D\mid S_{7,6}.
    \end{cases}\]
    \[\begin{cases}
        (\mathcal{H}\theta_{2,7})\mid U_4\mid S_{7,1}+\frac{1}{2}D_1^{(7,1)}\mid S_{7,1}+\frac{1}{2}D_1^{(7,6)}\mid S_{7,1} \\
        \quad \quad =\frac{7}{24}D\mid S_{7,1}+\frac{1}{8}G\mid S_{7,1},\\
        (\mathcal{H}\theta_{2,7})\mid U_4\mid S_{7,2}=\frac{1}{4}D\mid S_{7,2}-\frac{1}{4}G\mid S_{7,2},\\
        (\mathcal{H}\theta_{2,7})\mid U_4\mid S_{7,3}=\frac{1}{4}D\mid S_{7,3},\\
        (\mathcal{H}\theta_{2,7})\mid U_4\mid S_{7,4}+D_1^{(7,3)}\mid S_{7,4}+D_1^{(7,6)}\mid S_{7,4}=\frac{1}{3}D\mid S_{7,4},\\
        (\mathcal{H}\theta_{2,7})\mid U_4\mid S_{7,5}=\frac{1}{4}D\mid S_{7,5},\\
        (\mathcal{H}\theta_{2,7})\mid U_4\mid S_{7,6}+D_1^{(7,4)}\mid S_{7,6}+D_1^{(7,5)}\mid S_{7,6}=\frac{1}{3}D\mid S_{7,6}.
    \end{cases}\]
    \[\begin{cases}
        (\mathcal{H}\theta_{3,7})\mid U_4\mid S_{7,1}=\frac{1}{4}D\mid S_{7,1}-\frac{1}{4}G\mid S_{7,1},\\
        (\mathcal{H}\theta_{3,7})\mid U_4\mid S_{7,2}+D_1^{(7,1)}\mid S_{7,2}+D_1^{(7,2)}\mid S_{7,2}=\frac{1}{3}D\mid S_{7,2},\\
        (\mathcal{H}\theta_{3,7})\mid U_4\mid S_{7,3}+D_1^{(7,4)}\mid S_{7,3}+D_1^{(7,6)}\mid S_{7,3}=\frac{1}{3}D\mid S_{7,3},\\
        (\mathcal{H}\theta_{3,7})\mid U_4\mid S_{7,4}+\frac{1}{2}D_1^{(7,2)}\mid S_{7,4}+\frac{1}{2}D_1^{(7,5)}\mid S_{7,4} \\
        \quad \quad =\frac{7}{24}D\mid S_{7,4}+\frac{1}{8}G\mid S_{7,1},\\
        (\mathcal{H}\theta_{3,7})\mid U_4\mid S_{7,5}=\frac{1}{4}D\mid S_{7,5},\\
        (\mathcal{H}\theta_{3,7})\mid U_4\mid S_{7,6}=\frac{1}{4}D\mid S_{7,6}.
    \end{cases}\]
\end{theorem}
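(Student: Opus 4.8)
The plan is to read each displayed identity as an equality between two honest weight-$2$ modular forms on the relevant group $\Gamma$ --- namely $\Gamma=\Gamma_0(4\cdot 49)$ in part (1) and $\Gamma=\Gamma_0(4\cdot 49)\cap\Gamma_1(7)$ in part (2) --- and to finish with the Sturm bound. For the left-hand sides I would specialize Theorem \ref{27} to $k=0$: since $[\mathcal{H},\theta_{m,7}]_0=\mathcal{H}\theta_{m,7}$ and $2^{-1}\binom{0}{0}=\tfrac12$, the function $(\mathcal{H}\theta_{m,7}+\tfrac12\Lambda_{1,m,7})\mid U_4$ is quasimodular of weight $2$ on $\Gamma$, and applying $S_{7,j}$ with $j\not\equiv 0\pmod 7$ (or, for $m=0$, the twist $\otimes\chi_{-7}^2$) annihilates the $n=0$ coefficient, hence the $\text{Im}(\tau)^{-1}$-part of the quasimodular completion, so the result lies in $\mathcal{M}_2(\Gamma)$. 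Then I would invoke Proposition \ref{31} to rewrite $\tfrac12\Lambda_{1,m,7}\mid U_4$ as the indicated combination of series $D_1^{(7,a)}\mid S_{7,\ast}$; pushing $S_{7,j}$ (resp.\ $\otimes\chi_{-7}^2$) through and collecting terms using the relation between $\phi_1^{(7,a)}$ and $\phi_1^{(7,-a)}$, one recovers exactly the $D_1$-terms written on the left of each case. In short, every left-hand side equals $(\mathcal{H}\theta_{m,7}+\tfrac12\Lambda_{1,m,7})\mid U_4\mid S_{7,j}$ (resp.\ its $\chi_{-7}^2$-twist), which belongs to $\mathcal{M}_2(\Gamma)$.

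Next I would confirm that each right-hand side also lies in $\mathcal{M}_2(\Gamma)$: this follows from the proposition on $D\mid S_{m,M}$ (for the terms $D\mid S_{7,j}$), from parts (3)--(4) of the operator proposition (for the twists of $D$ and $G$ by functions of $n\bmod 7$ and for $G\mid S_{7,j}$), and from $G\in S_2^{\mathrm{new}}(\Gamma_0(49))$; in every case the form sits on a congruence subgroup containing $\Gamma$. With both sides in $\mathcal{M}_2(\Gamma)$, the index lemma and the Sturm bound proposition reduce each identity to a finite comparison of Fourier coefficients up to $B(\mathcal{M}_2(\Gamma))$ --- equal to $56$ when $\Gamma=\Gamma_0(4\cdot 49)$ and to $336$ when $\Gamma=\Gamma_0(4\cdot 49)\cap\Gamma_1(7)$.

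What remains is that finite check. The $n$-th coefficient of a left-hand side is an explicit $\mathbb{Z}$-combination of $H_{m,7}(n)=\sum_{t\equiv m\,(7)}H(4n-t^2)$ and of the restricted divisor sums $\phi_1^{(7,a)}(n)$, hence computable from a table of $H(N)$ for $N\le 4B$ together with routine divisor bookkeeping; the $n$-th coefficient of a right-hand side is a combination of $\sigma(n)$ and of the Hecke eigenvalues $a_G(n)$ of $G$. Tabulating both up to the Sturm bound and verifying the equalities establishes the theorem; for $m=0$ (part (1)) the identity is moreover already contained in \cite{BRINGMANN_2018} and \cite{Mertens_2016}. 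The precise shape of the right-hand sides --- the $\tfrac14,\tfrac{7}{24},\tfrac18$ normalizations and the sign, scaling and residue-class support of the $G$-term --- is located by a preliminary numerical search and then certified by this comparison; heuristically, the $G$-contribution is forced by the discrepancies between $\phi_1^{(7,a)}$ and $\phi_1^{(7,-a)}$, which are Hecke-character theta series and so recombine into the CM form $G$.

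The main obstacle is twofold. Conceptually, one must handle with care the passage from the quasimodular object of Theorem \ref{27}, which carries an $\text{Im}(\tau)^{-1}$ tail, to a genuine modular form: this is exactly where $j\not\equiv 0\pmod 7$ (respectively the non-principal twist in part (1)) is essential, and one must track the level inflation produced by $U_4$, $S_{7,j}$ and $\otimes\chi_{-7}^2$ through the operator proposition to be sure the identity really lives in the single space $\mathcal{M}_2(\Gamma)$. Computationally, the Sturm bound $336$ for $\Gamma_0(4\cdot 49)\cap\Gamma_1(7)$ makes the verification substantial, and the bookkeeping --- keeping straight, across the eighteen cases of part (2), which residue $m$, which sieving class $j$ and which auxiliary residues $a$ occur, together with the correct rational coefficients --- is where most of the care is required.
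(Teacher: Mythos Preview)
Your proposal is correct and follows essentially the same route as the paper: establish via Theorem~\ref{27} and Proposition~\ref{31} that the left-hand side of each identity lies in $\mathcal{M}_2(\Gamma)$, observe that the right-hand side does as well since $D\mid S_{m,7}$ and $G$ are weight-$2$ forms on $\Gamma_0(49)$, and then verify equality by comparing Fourier coefficients up to the Sturm bound ($56$ for part~(1), $336$ for part~(2)). Your write-up in fact supplies more detail than the paper's proof on why the sieving kills the quasimodular tail and on how the finite check is carried out.
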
 
\begin{proof}
    By proposition \ref{31}, \[(\mathcal{H}\theta_{0,7})\mid U_4\otimes\chi_{-7}^2+2D_1^{(7,1)}\mid S_{7,6}+2D_1^{(7,2)}\mid S_{7,3}+2D_1^{(7,3)}\mid S_{7,5}\in \mathcal{M}_2(\Gamma_0(4\cdot 49)),\]
    and the following are inside $\mathcal{M}_2(\Gamma_0(4\cdot 49)\cap\Gamma_1(7))$:
    \[(\mathcal{H}\theta_{1,7})\mid U_4\otimes\chi_{-7}^2+\frac{1}{2}D_1^{(7,3)}\mid S_{7,2}+\frac{1}{2}D_1^{(7,4)}\mid S_{7,2}+D_1^{(7,3)}\mid S_{7,1}+D_1^{(7,6)}\mid S_{7,5}+D_1^{(7,2)}\mid S_{7,5}\]\[+D_1^{(7,5)}\mid S_{7,1},\]
    \[(\mathcal{H}\theta_{2,7})\mid U_4\otimes\chi_{-7}^2+\frac{1}{2}D_1^{(7,1)}\mid S_{7,1}+\frac{1}{2}D_1^{(7,6)}\mid S_{7,1}+D_1^{(7,4)}\mid S_{7,6}+D_1^{(7,3)}\mid S_{7,4}+D_1^{(7,6)}\mid S_{7,4}\]\[+D_1^{(7,5)}\mid S_{7,6},\]
    \[(\mathcal{H}\theta_{3,7})\mid U_4\otimes\chi_{-7}^2+\frac{1}{2}D_1^{(7,2)}\mid S_{7,4}+\frac{1}{2}D_1^{(7,5)}\mid S_{7,4}+D_1^{(7,1)}\mid S_{7,2}+D_1^{(7,4)}\mid S_{7,3}+D_1^{(7,6)}\mid S_{7,3}\]\[+D_1^{(7,2)}\mid S_{7,2}.\]
    By pervious discussion, if $m\neq 0$, $D\mid S_{m,7}$ and $G$ are modular forms of weight 2 in $\Gamma_0(49)$, therefore they are in $\mathcal{M}_2(\Gamma_0(4\cdot 49)\cap\Gamma_1(7))$ and $\mathcal{M}_2(\Gamma_0(4\cdot 49))$. For every equation in our theorem, we compare the first 337 coefficients of $q$-series if $m\neq 0$ and the first 57 coefficients of $q$-series if $m=0$. They all coincides, therefore these equations are true.
\end{proof}

This theorem offers a method to compute $H_{m,7}(p)$. However, we need to compute the coefficients of CM newform $G$ first. In the next section, we reveal the connection between the Fourier expansion of $G$ and $\chi_{-7}(x)x$ where $p=x^2+7y^2$. 

\section{THE COEFFICIENTS OF NEWFORM G}
In \cite{zindulka2024sumshurwitzclassnumbers}, the author construct modular forms generated by $\chi(x)x$ where $p=x^2+ny^2$ for some given $n$ and $\chi$. Using this modular form, the author successfully linked $H_{m,M}(n)$ with the previous $x$ when $m=6,8$. Thanks to Zindulka's idea, we may construct a modular form of weight 2 under $\mathcal{M}_2(\Gamma_0(4\cdot 49))$.

\begin{proposition}
    Let \[\Psi_k(\chi,\tau)=\frac{1}{2}\sum_{n=1}^{\infty}(\sum_{\substack{x,y\in\mathbb{Z}\\[1pt]x^2+ky^2=n}}\chi(x)x)q^n,\] where $\chi$ is an odd dirichlet character and $k\in\mathbb{Z}_{\geq2}$, then \[\Psi_7(\chi_{-7},\tau)\in\mathcal{M}_2(\Gamma_0(196)).\]
\end{proposition}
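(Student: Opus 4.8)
The statement fits the pattern $\Psi_n(\chi,\tau)\in\mathcal M_2(\Gamma_0(4n^2))$ already verified for $n=3$ and $n=4$ in \cite{zindulka2024sumshurwitzclassnumbers}, and my plan is to prove it the same way: exhibit $\Psi_7(\chi_{-7},\tau)$ as a linear combination of theta series of the binary form $Q(x,y)=x^2+7y^2$ decorated with a harmonic polynomial of degree one, and then invoke the transformation theory of such series. The weight is forced, since a rank-$2$ lattice with a spherical polynomial of degree $1$ produces weight $\frac22+1=2$; so the content is entirely in (a) pinning the level down to exactly $196=4\cdot 7^2$ and (b) checking that the nebentypus is trivial.

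For (a) I would first remove the Dirichlet character from the summation condition. Writing $\chi_{-7}(x)=\sum_{a=1}^{6}\chi_{-7}(a)\,\mathbf 1_{\,x\equiv a\ (7)}$, one gets
\[\Psi_7(\chi_{-7},\tau)=\frac12\sum_{a=1}^{6}\chi_{-7}(a)\,\Theta_a(\tau),\qquad \Theta_a(\tau):=\sum_{\substack{x\equiv a\ (\mathrm{mod}\ 7)\\ y\in\mathbb Z}}x\,q^{x^2+7y^2}.\]
Substituting $x=7w$ with $w\in\frac a7+\mathbb Z$ rewrites $\Theta_a(\tau)=\sum_{w\in a/7+\mathbb Z,\ y\in\mathbb Z}7w\,q^{49w^2+7y^2}$, i.e. a theta series attached to the diagonal form $Q'(w,y)=49w^2+7y^2$, the linear — hence harmonic, since the dual Laplacian $\frac1{49}\partial_w^2+\frac17\partial_y^2$ annihilates it — polynomial $P'(w,y)=7w$, and the rational characteristic $(a/7,0)$. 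By the classical transformation theory of theta series with spherical polynomials and rational characteristics (Schoeneberg, Shimura; this is exactly the input used for the $n=3,4$ cases in \cite{zindulka2024sumshurwitzclassnumbers}), each $\Theta_a$ is a holomorphic modular form of weight $2$ on $\Gamma_1(N')$, where $N'$ is the least common multiple of the level of $Q'$ and the square of the denominator $7$ of the characteristic. Since the level of $49w^2+7y^2$ is $4\cdot\mathrm{lcm}(49,7)=196$ and $49\mid 196$, we obtain $\Theta_a\in\mathcal M_2(\Gamma_1(196))$ for every $a$; moreover $\deg P'=1>0$ makes each $\Theta_a$, and hence $\Psi_7(\chi_{-7},\tau)$, vanish at every cusp, so it is in fact a cusp form.

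For (b) I would analyze the diamond operators. For $d\in(\mathbb Z/196)^\times$ the operator $\langle d\rangle$ permutes the $\Theta_a$, sending the characteristic $a/7$ to $da/7$, up to the quadratic theta multiplier of $Q'$; combined with $\chi_{-7}(da)=\chi_{-7}(d)\chi_{-7}(a)$, this forces $\frac12\sum_a\chi_{-7}(a)\Theta_a$ to be a common eigenvector of all $\langle d\rangle$, hence a form on $\Gamma_0(196)$ with some real nebentypus $\varepsilon$ of modulus dividing $196$. I would then identify $\varepsilon$ by evaluating the relevant Gauss sum — the quadratic theta multiplier of $Q'$ paired against $\chi_{-7}$ — expecting the two quadratic characters to cancel and leave $\varepsilon$ trivial. (Equivalently, $\Psi_7(\chi_{-7},\tau)$ is a CM form for $\mathbb Q(\sqrt{-7})$ with rational integer coefficients, so $\varepsilon$ is real, even because the weight is $2$, and unramified outside $\{2,7\}$, and the CM structure together with evenness rules out the only nontrivial candidate.) This gives $\Psi_7(\chi_{-7},\tau)\in S_2(\Gamma_0(196))\subset\mathcal M_2(\Gamma_0(196))$.

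I expect step (b), the exact determination of the nebentypus, to be the real obstacle: it demands careful bookkeeping of the theta multiplier system, of the Gauss sums produced by the congruence conditions $x\equiv a\ (7)$, and of their interaction, so as to conclude $\varepsilon$ is \emph{trivial} and not merely that $\Psi_7$ lives on $\Gamma_1(196)$. A convenient independent confirmation of both level and nebentypus comes from the complex-multiplication picture: since the $\sqrt{-7}$-component of $\sum_{x,y}\chi_{-7}(x)(x+y\sqrt{-7})\,q^{x^2+7y^2}$ vanishes under $y\mapsto -y$, the series $\Psi_7(\chi_{-7},\tau)$ equals the theta series of a Hecke character of $\mathbb Q(\sqrt{-7})$ of infinity type $1$ built from $\chi_{-7}$ and the order of conductor $2$, which by Hecke's theorem is a weight-$2$ form of level $7\cdot 28=196$; this is also precisely the ``connection'' with the level-$49$ newform $G$ that is exploited in Section~4.
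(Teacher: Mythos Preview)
Your approach is workable but takes a genuinely different route from the paper, and the difference is precisely the point you flagged as the ``real obstacle.'' Instead of expanding $\chi_{-7}$ into residue classes and writing $\Psi_7(\chi_{-7},\tau)$ as a linear combination of rank-$2$ theta series with rational characteristics, the paper simply separates the $x$- and $y$-summations and factors
\[
\Psi_7(\chi_{-7},\tau)=\theta(\chi_{-7},1,\tau)\cdot\bigl(\theta_0\!\mid\! V(7)\bigr)(\tau),
\]
where $\theta(\chi,1,\tau)=\tfrac12\sum_{x\in\mathbb Z}\chi(x)x\,q^{x^2}$ and $\theta_0(\tau)=\sum_{y\in\mathbb Z}q^{y^2}$. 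Citing \cite{zindulka2024sumshurwitzclassnumbers}, the first factor lies in $\mathcal S_{3/2}(\Gamma_0(196),\chi_{-7}\chi_{-4})$ and the second in $\mathcal M_{1/2}(\Gamma_0(28),(\tfrac{7}{\cdot}))$, so their product is in $\mathcal M_2(\Gamma_0(196))$ with nebentypus $\chi_{-7}\chi_{-4}\cdot(\tfrac{7}{\cdot})$; a finite check on residues modulo $28$ shows this character is identically $1$.

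What this buys is exactly the elimination of your step~(b): in the factored form the nebentypus is handed to you as a product of three explicit Dirichlet characters, so there is no need to track theta multipliers under diamond operators, compute Gauss sums, or appeal to the CM structure. Your decomposition via characteristics would generalize better to non-diagonal binary forms, and your CM argument is a nice independent confirmation, but for the diagonal form $x^2+7y^2$ the paper's one-line factorization into univariate theta series is considerably shorter and sidesteps the bookkeeping entirely.
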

\begin{proof}
    If we set $\theta_0(\tau):=\sum_{y\in\mathbb{Z}}q^{y^2}$ and $\theta(\chi,1,\tau):=\frac{1}{2}\sum_{x\in\mathbb{Z}}\chi(x)xq^{x^2}$. Then \[\Psi_k(\chi,n)=\theta(\chi,1,\tau)\cdot(\theta_0\mid V(k)(\tau)\]. From \cite{zindulka2024sumshurwitzclassnumbers}, $\theta(\chi,1,\tau)\in \mathcal{S}_{\frac{3}{2}}(\Gamma_0(4N_{\chi}^2),\chi\chi_{-4})$ and $(\theta_0\mid V(k))(\tau)\in \mathcal{M}_{\frac{1}{2}}(\Gamma_0(4k),\left(\frac{k}{\cdot}\right))$.

    In our situation, $\theta(\chi_{-7},1,\tau)\in\mathcal{S}_{\frac{3}{2}}(\Gamma_0(196),\chi_{-7}\chi_{-4})$, $(\theta_0\mid V(7))(\tau)\in \mathcal{M}_{\frac{1}{2}}(\Gamma_0(28),\left(\frac{7}{\cdot}\right))$. By checking $n$ in range $\{0,1,2,...,27\}$, $\chi_{-4}\chi_{-7}\left(\frac{7}{\cdot}\right)=1$. Therefore \[\Psi_7(\chi_{-7},\tau)=\theta(\chi_{-7},1,\tau)\cdot(\theta_0\mid V(7)(\tau)\in \mathcal{M}_2(\Gamma_0(196)).\]
\end{proof}

Next, we are going to represent $\Psi_7(\chi_{-7},\tau)$ as linear combinations of $G$. 

\begin{lemma}\label{42}
    \[\Psi_7(\chi_{-7},\tau)=G-G\mid U(2)+4G\mid U(4).\]
\end{lemma}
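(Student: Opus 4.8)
The plan is to recognize both sides of the asserted identity as elements of one and the same finite-dimensional space $\mathcal{M}_2(\Gamma_0(196))$, and then to deduce the equality from a finite check of Fourier coefficients via the Sturm bound. First I would confirm the membership. The left-hand side lies in $\mathcal{M}_2(\Gamma_0(196))=\mathcal{M}_2(\Gamma_0(4\cdot 49))$ by the preceding proposition. For the right-hand side, $G\in S_2^{\mathrm{new}}(\Gamma_0(49))\subset\mathcal{M}_2(\Gamma_0(49))$, and part (1) of the proposition on the operators (with $N_1=49$, $N_2=1$) gives $G\mid U(2)\in\mathcal{M}_2(\Gamma_0(98))$ and $G\mid U(4)\in\mathcal{M}_2(\Gamma_0(196))$; since $\Gamma_0(196)\subset\Gamma_0(98)\subset\Gamma_0(49)$, each of $G$, $G\mid U(2)$, $G\mid U(4)$ — and hence their linear combination — belongs to $\mathcal{M}_2(\Gamma_0(196))$, with trivial nebentypus.

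Next I would compute the Sturm bound of that space. By the index lemma, $[\mathrm{SL}_2(\mathbb{Z}):\Gamma_0(196)]=196\prod_{p\mid 196}(1+p^{-1})=196\cdot\tfrac{3}{2}\cdot\tfrac{8}{7}=336$, so $B\bigl(\mathcal{M}_2(\Gamma_0(196))\bigr)=\lfloor 2\cdot 336/12\rfloor=56$. Hence two weight-$2$ forms of level $196$ with trivial character coincide as soon as their $q$-expansions agree through $q^{56}$.

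Finally I would carry out the coefficient comparison. Writing $G=\sum_{m\geq 1}a(m)q^m$ for the newform labeled $49.2.a.a$, and adopting the convention $a(m)=0$ for $m\notin\mathbb{Z}_{\geq 1}$, the coefficient of $q^n$ on the right-hand side is $a(n)-a(2n)+4a(4n)$, while by the definition of $\Psi_7$ the coefficient of $q^n$ on the left-hand side is $\tfrac12\sum_{x^2+7y^2=n}\chi_{-7}(x)x$. It then remains to verify that these two expressions agree for every $0\leq n\leq 56$. This is a bounded, routine computation; it can be organized using multiplicativity of $a$, the vanishing $a(p)=0$ for primes $p\equiv 3,5,6\pmod{7}$ together with $a(7)=0$ (reflecting the complex multiplication of $G$ by $\mathbb{Q}(\sqrt{-7})$), the Hecke recursions at prime powers, and the fact that $a(p)=\pm 2x$ at a split prime $p=x^2+7y^2$.

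There is no conceptual obstacle here: once the first step places both sides inside a single space $\mathcal{M}_k(N,\chi)$, the identity is settled by the finite check of the third step, so the only real work is to get the level and character bookkeeping right and to push the coefficient verification all the way to $n=56$ rather than stopping prematurely. A more structural alternative would be to decompose $S_2(\Gamma_0(196))$ into its old and new subspaces, observe that the only piece carrying complex multiplication by $\mathbb{Q}(\sqrt{-7})$ of level dividing $196$ is the span of the oldclasses $G(\tau)$, $G(2\tau)$, $G(4\tau)$, conclude a priori that $\Psi_7(\chi_{-7},\tau)$ is a linear combination of these three, and then pin down the coefficients from the cases $n=1,2,4$; but the direct comparison above is shorter and entirely self-contained.
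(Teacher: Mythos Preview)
Your proof is correct and follows essentially the same approach as the paper: place both sides in $\mathcal{M}_2(\Gamma_0(196))$, invoke the Sturm bound $B=56$, and verify agreement of the first $56$ Fourier coefficients. You have supplied more detail than the paper (the index computation, explicit coefficient formulas, and an alternative structural argument via the oldclass decomposition), but the underlying strategy is identical.
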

\begin{proof}
    $G\in\mathcal{S}_2(\Gamma_0(49))$, therefore $G$, $G\mid U(2)$ and $G\mid U(4)$ are all in $\mathcal{S}_2(\Gamma_0(196))$. By Sturm bound, we need to compute the first 56 coefficients of $q$-series of both sides of the equation. The first 56 coefficients of $q$-series of both sides of the equation coincides, therefore $\Psi_7(\chi_{-7},\tau)=G-G\mid U(2)+4G\mid U(4)$.
\end{proof}

Lemma \ref{42} build a bridge between the Fourier expansion of $G$ and $\Psi_7(\chi_{-7},\tau)$. For an odd prime $p\neq 7$, the $q^p$ term vanish in $G\mid U(2)$, which means the coefficient of $q^p$ term of $G_7$ equals to \[\frac{1}{2}\sum_{\substack{x,y\in\mathbb{Z}\\[1pt]x^2+7y^2=p}}\chi_{-7}(x)x.\]

\begin{lemma}\label{43}
    For odd prime $p\neq 7$ if $\left(\frac{7}{p}\right)=1$ (which means $p\equiv 1,2,4\mod 7$), then there exist unique pair $(x,y)\in\mathbb{Z}_{> 0}\times\mathbb{Z}_{> 0}$ such that $p=x^2+7y^2$.
\end{lemma}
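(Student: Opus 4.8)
\emph{Proof proposal.} The plan is to reduce the representation problem to the arithmetic of the imaginary quadratic field $K=\mathbb{Q}(\sqrt{-7})$, whose ring of integers $\mathcal{O}_K=\mathbb{Z}\left[\frac{1+\sqrt{-7}}{2}\right]$ has class number one and unit group $\mathcal{O}_K^{\times}=\{\pm 1\}$. The hypothesis — equivalently, $p\equiv 1,2,4\pmod 7$, equivalently $\left(\frac{-7}{p}\right)=1$ — says exactly that $p$ splits in $\mathcal{O}_K$; here $p\neq 7$ guarantees that $p$ is unramified.

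For existence, write $p\mathcal{O}_K=\mathfrak{p}\bar{\mathfrak{p}}$ with $\mathfrak{p}\neq\bar{\mathfrak{p}}$. Since $\mathcal{O}_K$ is a PID, $\mathfrak{p}=(\pi)$ for a prime element $\pi$ with $N(\pi)=p$. Writing $\pi=\frac{a+b\sqrt{-7}}{2}$ with $a\equiv b\pmod 2$, the norm condition becomes $a^2+7b^2=4p$. If $a$ and $b$ were both odd then $a^2+7b^2\equiv 1+7\equiv 0\pmod 8$, forcing $p$ to be even, a contradiction; hence $a$ and $b$ are both even, and with $x:=|a|/2$, $y:=|b|/2$ we obtain $p=x^2+7y^2$. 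As $p$ is prime and $p\neq 7$, neither $x$ nor $y$ can vanish, so $(x,y)\in\mathbb{Z}_{>0}\times\mathbb{Z}_{>0}$.

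For uniqueness, suppose $p=x_1^2+7y_1^2=x_2^2+7y_2^2$ with $x_i,y_i>0$. Then $\alpha_i:=x_i+y_i\sqrt{-7}\in\mathbb{Z}[\sqrt{-7}]\subset\mathcal{O}_K$ has norm $p$, hence is irreducible and so prime in the UFD $\mathcal{O}_K$; moreover $(\alpha_i)$ is an ideal of norm $p$, so $(\alpha_i)\in\{\mathfrak{p},\bar{\mathfrak{p}}\}$. After relabelling $\mathfrak{p},\bar{\mathfrak{p}}$ and adjusting the sign, we may assume $\pi=\alpha_1=x_1+y_1\sqrt{-7}$, whence $\bar{\pi}=x_1-y_1\sqrt{-7}$ and $\alpha_2\in\{\pm\pi,\pm\bar{\pi}\}$. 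Thus $(x_2,y_2)$ equals one of $(\pm x_1,\pm y_1)$, and positivity of $x_1,y_1,x_2,y_2$ forces $(x_1,y_1)=(x_2,y_2)$.

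The argument is entirely classical, and the only point that needs attention is that $\mathbb{Z}[\sqrt{-7}]$ is the non-maximal order of conductor $2$, hence not a PID: the factorization must be carried out in $\mathcal{O}_K$, and the parity analysis of $a,b$ in the existence step is precisely what upgrades a prime $\pi\in\mathcal{O}_K$ of norm $p$ to an honest representation $p=x^2+7y^2$ rather than merely $4p=a^2+7b^2$. This is the step I expect to be the crux. (Alternatively one could argue purely via binary quadratic forms, using that the form class group of discriminant $-28$ is trivial — the principal form $x^2+7y^2$ being the unique reduced primitive form — together with the standard count of representations of an odd prime by a single class; but the ideal-theoretic route above is the most direct.)
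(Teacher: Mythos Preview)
Your argument is correct and complete. The paper itself gives no proof at all: it simply cites (2.17) of Cox, \emph{Primes of the form $x^2+ny^2$}. Your route via the PID $\mathcal{O}_K=\mathbb{Z}\bigl[\tfrac{1+\sqrt{-7}}{2}\bigr]$ is the standard ideal-theoretic proof and is essentially the content behind that citation (Cox phrases the early chapters in the language of reduced binary quadratic forms of discriminant $-28$, which has a single class, but the two viewpoints are equivalent). The parity step showing that a generator $\pi=\tfrac{a+b\sqrt{-7}}{2}$ of norm $p$ must have $a,b$ even when $p$ is odd is exactly the point that bridges the maximal order to the suborder $\mathbb{Z}[\sqrt{-7}]$, and you handle it cleanly. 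One incidental remark: you quietly fix a slip in the statement --- the splitting condition is $\left(\tfrac{-7}{p}\right)=1$, equivalently $\left(\tfrac{p}{7}\right)=1$, i.e.\ $p\equiv 1,2,4\pmod 7$; the symbol $\left(\tfrac{7}{p}\right)$ written in the lemma does not by itself encode that congruence (e.g.\ $p=11$).
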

\begin{proof}
See (2.17) of \cite{primenxy}.
\end{proof}

Lemma \ref{43} map every odd prime $p$ that $p\equiv 1,2,4\mod 7$ to a unique $x$. Comparing the coefficients of equations in Thm \ref{35} and notice that $H_{m,M}(n)=H_{-m,M}(n)$, one instantly get the following formulas:

\begin{theorem}\label{main2}
    Let $p>2$ be a odd prime and $p\equiv 1,2,4 \mod 7$. Then There exist exactly one positive integer $x$ such that $p=x^2+7y^2$. Let $\chi_{-7}$ denotes the non principle character modulo 7. We have\[
    H_{a,7}(p)=\begin{cases}
\frac{p+1}{4}+\frac{1}{2}\chi_{-7}(x)x & \text{if } a=0 \\
\frac{p+1}{4}-\frac{1}{2}\chi_{-7}(x)x & \text{if } a=\pm 1, p\equiv 4\mod 7, \text{or }a=\pm 2, p\equiv 1\mod 7, \\ & \text{or }a=\pm 3, p\equiv 1\mod 7,\\
\frac{7p+7}{24}+\frac{1}{4}\chi_{-7}(x)x & \text{if } a=\pm 1, p\equiv 2\mod 7, \text{or }a=\pm 3, p\equiv 4\mod\\
\frac{7p-17}{24}+\frac{1}{4}\chi_{-7}(x)x & \text{if } a=\pm 2, p\equiv 1\mod 7.
\end{cases}
    \]
\end{theorem}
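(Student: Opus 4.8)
The plan is to deduce Theorem~\ref{main2} directly from Theorem~\ref{35} together with Lemmas~\ref{42} and~\ref{43}, by reading off the coefficient of $q^p$ on both sides of each identity for an odd prime $p\equiv 1,2,4\pmod 7$. First I would recall that by definition $\mathcal H_{m,7}=(\mathcal H\theta_{m,7})\mid U_4$, so the coefficient of $q^p$ in $(\mathcal H\theta_{m,7})\mid U_4$ is exactly $H_{m,7}(p)=\sum_{t\equiv m(7)}H(4p-t^2)$; since the sieving operator $S_{7,r}$ simply retains the $q^n$ with $n\equiv r\pmod 7$, applying $S_{7,r}$ with $r\equiv p\pmod 7$ to the relevant identity in Theorem~\ref{35} leaves the $q^p$-coefficient of each side unchanged. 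Thus the whole argument reduces to identifying, for each residue class of $p$ mod $7$ and each $a\in\{0,\pm1,\pm2,\pm3\}$, which of the six (or, for $m=0$, the single $\otimes\chi_{-7}^2$) equations in Theorem~\ref{35} governs $H_{a,7}(p)$, and then computing the $q^p$-coefficient of the right-hand side.

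Next I would evaluate the three building blocks at $q^p$. For $D(\tau)=\sum\sigma(n)q^n$ one has $\sigma(p)=p+1$, so the coefficient of $q^p$ in $D\mid S_{7,r}$ is $p+1$ whenever $r\equiv p\pmod 7$; the twisted versions $D\otimes\chi_{-7}^2$ and $D\otimes\chi_{-7}\otimes(\chi_7-1)$ contribute $\chi_{-7}^2(p)(p+1)$ and $\chi_{-7}(p)(\chi_7(p)-1)(p+1)$ respectively, and since $p\equiv 1,2,4\pmod 7$ these characters are all $+1$ or $0$ in a way I would tabulate explicitly — in particular $\chi_7(p)=1$ kills the second term for these residues (consistent with the absence of an extra $\sigma$-contribution in the $a=0$ formula beyond $\frac{p+1}{4}$). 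For the CM newform $G$, the key input is Lemma~\ref{42}: since $p\neq 2,7$ the operator $U(2)$ and $U(4)$ annihilate $q^p$, so the coefficient of $q^p$ in $G$ equals the coefficient of $q^p$ in $\Psi_7(\chi_{-7},\tau)=\tfrac12\sum_{x^2+7y^2=p}\chi_{-7}(x)x$. By Lemma~\ref{43} there is a unique $(x,y)\in\mathbb Z_{>0}^2$ with $p=x^2+7y^2$, and the sum over all $(x,y)\in\mathbb Z^2$ collapses: the four sign combinations $(\pm x,\pm y)$ give $\chi_{-7}(x)x+\chi_{-7}(-x)(-x)+\chi_{-7}(x)x+\chi_{-7}(-x)(-x)=4\chi_{-7}(x)x$ since $\chi_{-7}$ is odd, so the $q^p$-coefficient of $G$ is $2\chi_{-7}(x)x$. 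Finally I would note $p\equiv x^2\pmod 7$, so $\chi_{-7}(p)=\chi_{-7}(x)^2=1$ automatically (and similarly for the sieved twists of $G$ appearing in Theorem~\ref{35}).

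Putting these together line by line: e.g.\ the equation $(\mathcal H\theta_{3,7})\mid U_4\mid S_{7,1}=\frac14 D\mid S_{7,1}-\frac14 G\mid S_{7,1}$ gives, at $q^p$ with $p\equiv 1\pmod 7$ and $a=3$, $H_{3,7}(p)=\frac14(p+1)-\frac14\cdot 2\chi_{-7}(x)x=\frac{p+1}{4}-\frac12\chi_{-7}(x)x$, which is the second case of the theorem; the equation with right-hand side $\frac{7}{24}D+\frac18 G$ gives $\frac{7(p+1)}{24}+\frac18\cdot 2\chi_{-7}(x)x=\frac{7p+7}{24}+\frac14\chi_{-7}(x)x$, the third case; the $m=2$, $p\equiv 1$ line reading $(\mathcal H\theta_{2,7})\mid U_4\mid S_{7,1}+\frac12 D_1^{(7,1)}\mid S_{7,1}+\frac12 D_1^{(7,6)}\mid S_{7,1}=\frac{7}{24}D\mid S_{7,1}+\frac18 G\mid S_{7,1}$ requires in addition computing the $q^p$-coefficient of $D_1^{(7,1)}\mid S_{7,1}+D_1^{(7,6)}\mid S_{7,1}$ via Proposition~\ref{31}: for prime $p$ the only divisors are $1$ and $p$, and $\phi_1^{(7,a)}(p)$ picks up $1$ (from $d=1<\sqrt p$, if $1\equiv\pm a$) plus possibly $p$ (never, since $p>\sqrt p$), so this term evaluates to a small constant, which I would track carefully to land on $\frac{7p-17}{24}$ rather than $\frac{7p+7}{24}$. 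I would organize all of this as a finite case check over the pairs $(a\bmod 7,\;p\bmod 7)\in\{0,\pm1,\pm2,\pm3\}\times\{1,2,4\}$, using $H_{m,7}(n)=H_{-m,7}(n)$ to halve the work, and the final cosmetic step is to restate Theorem~\ref{main2} as Theorem~\ref{main} (they are identical). The only genuinely delicate point — and the one I expect to be the main obstacle — is bookkeeping the constant terms: correctly evaluating the $q^p$-coefficients of the various $D_l^{(7,a)}\mid S_{7,r}$ correction terms from Proposition~\ref{31} and combining them with the rational coefficients $\tfrac14,\tfrac1{24},\tfrac7{24},\tfrac18$ so that the additive constants ($+1$, $+7$, $-17$) come out exactly right; the $p$-linear part and the $\chi_{-7}(x)x$ part are comparatively transparent once Lemma~\ref{42} is in hand.
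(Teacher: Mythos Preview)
Your proposal is correct and follows essentially the same route as the paper: the paper's own proof consists of the single sentence ``Comparing the coefficients of equations in Thm~\ref{35} and notice that $H_{m,M}(n)=H_{-m,M}(n)$, one instantly get the following formulas,'' together with the preceding remark (from Lemmas~\ref{42} and~\ref{43}) that the $q^p$-coefficient of $G$ is $\tfrac12\sum_{x^2+7y^2=p}\chi_{-7}(x)x=2\chi_{-7}(x)x$, which is exactly the computation you carry out in more detail. One small caution: in Lemma~\ref{42} the operators should be $V(2),V(4)$ rather than $U(2),U(4)$ (only then do the $q^p$-terms vanish for odd $p$), so your phrase ``$U(2)$ and $U(4)$ annihilate $q^p$'' inherits a typo from the paper---but your conclusion $a_G(p)=2\chi_{-7}(x)x$ is the correct one, and with it the case check (including the $D_1^{(7,a)}$ bookkeeping yielding the constant $-17$) goes through as you describe.
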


At the end, we use this consequence to fill up the diagram of $H_{m,7}(p)$.

\vspace{7em}

\begin{table}[!ht]
    \centering
    \begin{tabular}{|c|c|c|c|c|}
    \hline
        ~ & $m=0$ & $m=\pm 1$ & $m=\pm 2$ & $m=\pm 3$ \\ \hline
        $p\equiv 1\mod 7$ & $\frac{p+1}{4}+\frac{1}{2}\chi_{-7}(x)x$ & $\frac{p+1}{3}$ & $\frac{7p-17}{24}+\frac{1}{4}\chi_{-7}(x)x$ & $\frac{p+1}{4}-\frac{1}{2}\chi_{-7}(x)$ \\ \hline
        $p\equiv 2\mod 7$ & $\frac{p+1}{4}+\frac{1}{2}\chi_{-7}(x)x$ & $\frac{7p+7}{24}+\frac{1}{4}\chi_{-7}(x)x$ & $\frac{p+1}{4}-\frac{1}{2}\chi_{-7}(x)$ & $\frac{p-2}{3}$ \\ \hline
        $p\equiv 3\mod 7$ & $\frac{p+1}{3}$ & $\frac{p+1}{4}$ & $\frac{p+1}{4}$ & $\frac{p-2}{3}$ \\ \hline
        $p\equiv 4\mod 7$ & $\frac{p+1}{4}+\frac{1}{2}\chi_{-7}(x)x$ & $\frac{p+1}{4}-\frac{1}{2}\chi_{-7}(x)$ & $\frac{p-2}{3}$ & $\frac{7p+7}{24}+\frac{1}{4}\chi_{-7}(x)x$ \\ \hline
        $p\equiv 5\mod 7$ & $\frac{p+1}{3}$ & $\frac{p-2}{3}$ & $\frac{p+1}{4}$ & $\frac{p+1}{4}$ \\ \hline
        $p\equiv 6\mod 7$ & $\frac{p-5}{3}$ & $\frac{p+1}{4}$ & $\frac{p+1}{3}$ & $\frac{p+1}{4}$ \\ \hline
    \end{tabular}
\end{table}

\bibliographystyle{unsrt}
\bibliography{refs}

\begin{thebibliography}{10}

\bibitem{history}
Leonard~Eugene Dickson.
\newblock {\em History of the theory of numbers}.
\newblock Dover Publications, 2005.

\bibitem{primenxy}
David A.~Cox; with contributions~by Roger~Lipsett.
\newblock {\em Primes of the form x² + ny²}.
\newblock AMS Chelsea Publishing/American Mathematical Society, 2022.

\bibitem{Brown2008EllipticCM}
Brittany Brown, Neil~J. Calkin, Timothy~B. Flowers, Kevin James, Ethan Smith, and Amy Marguerite~Irwin Stout.
\newblock Elliptic curves, modular forms, and sums of hurwitz class numbers.
\newblock {\em Journal of Number Theory}, 128:1847--1863, 2008.

\bibitem{BRINGMANN_2018}
KATHRIN BRINGMANN and BEN KANE.
\newblock Sums of class numbers and mixed mock modular forms.
\newblock {\em Mathematical Proceedings of the Cambridge Philosophical Society}, 167(02):321–333, June 2018.

\bibitem{Mertens_2016}
Michael~H. Mertens.
\newblock Eichler–selberg type identities for mixed mock modular forms.
\newblock {\em Advances in Mathematics}, 301:359–382, October 2016.

\bibitem{zindulka2024sumshurwitzclassnumbers}
Mikuláš Zindulka.
\newblock Sums of hurwitz class numbers, cm modular forms, and primes of the form $x^2+ny^2$, 2024.

\bibitem{lmfdb}
The {LMFDB Collaboration}.
\newblock The {L}-functions and modular forms database, home page of newform orbit 49.2.a.a, 2024.
\newblock [Online; accessed 26 November 2024].

\bibitem{dabholkar2014quantumblackholeswall}
Atish Dabholkar, Sameer Murthy, and Don Zagier.
\newblock Quantum black holes, wall crossing, and mock modular forms, 2014.

\bibitem{maass}
Ken Ono.
\newblock Unearthing the visions of a master: Harmonic maass forms and number theory.
\newblock {\em Curr. Dev. Math}, 2008, 01 2008.

\bibitem{bruinier2003geometricthetalifts}
Jan~Hendrik Bruinier and Jens Funke.
\newblock On two geometric theta lifts, 2003.

\bibitem{Hirzebruch1976IntersectionNO}
Friedrich Hirzebruch and Don Zagier.
\newblock Intersection numbers of curves on hilbert modular surfaces and modular forms of nebentypus.
\newblock {\em Inventiones Mathematicae}, 36:57--113, 1976.

\bibitem{Diamond2008AFC}
Fred Diamond and Jerry Shurman.
\newblock {\em A First Course in Modular Forms}.
\newblock Springer, 2008.

\bibitem{Cohen1975SumsIT}
Henri Cohen.
\newblock Sums involving the values at negative integers of l-functions of quadratic characters.
\newblock {\em Mathematische Annalen}, 217:271--285, 1975.

\bibitem{kane2022distributionmomentshurwitzclass}
Ben Kane and Sudhir Pujahari.
\newblock Distribution of moments of hurwitz class numbers in arithmetic progressions and holomorphic projection, 2022.

\bibitem{quasiintro}
Emmanuel Royer.
\newblock Quasimodular forms: an introduction.
\newblock {\em Annales Mathématiques Blaise Pascal}, 2, 01 2012.

\bibitem{mertens2013mockmodularformsclass}
Michael~H. Mertens.
\newblock Mock modular forms and class number relations, 2013.

\bibitem{sturmbound1}
William Stein.
\newblock {\em Modular forms, a computational approach}, volume~79 of {\em Graduate Studies in Mathematics}.
\newblock American Mathematical Society, Providence, RI, 2007.
\newblock With an appendix by Paul E. Gunnells.

\bibitem{sturmbound2}
Kevin Buzzard and Alan Lauder.
\newblock A computation of modular forms of weight one and small level, 2016.

\end{thebibliography}

$\,$

$\,$

\end{document}